\documentclass[10pt]{article}
\usepackage{amssymb,latexsym,amscd, amsthm, amsmath}

%\numberwithin{equation}{section}
%\renewcommand{\thesection}{\arabic{section}}
%\renewcommand{\theequation}{\thesection.\arabic{equation}}
\newtheorem{Thm}{Theorem}[section]
\newtheorem{Cor}[Thm]{Corollary}
\newtheorem{Prop}[Thm]{Proposition}
\newtheorem{Lem}[Thm]{Lemma}
\newtheorem{Def}[Thm]{Definition}
\newtheorem{Ex}[Thm]{Example}
\newtheorem{Rmk}[Thm]{Remark}

\date{}

\newcommand{\TT}{{\mathcal{T}}}

\newcommand{\RR}{{\mathbb{R}}}
\newcommand{\ZZ}{{\mathbb{Z}}}
\newcommand{\NN}{{\mathbb{N}}}
\newcommand{\QQ}{{\mathbb{Q}}}
\newcommand{\GT}{{\mathcal{G}_{2,n}}}

\newcommand{\GM}{{\mathcal{G}_{m,n}}}
\newcommand{\TTT}{{\mathcal{T}_{3,n}}}
\newcommand{\TM}{{\mathcal{T}_{m,n}}}
\newcommand{\TN}{{\mathcal{T}_{n}}}
\newcommand{\CM}{{\mathcal{C}_{m}}}
\newcommand{\val}{{\text{val}}}
\newcommand{\trop}{{\text{trop}}}

\begin{document}

\title{A tropical interpretation of $m-$dissimilarity maps}
\author{Cristiano Bocci \footnote{I.T.I.S. ``A. Avogadro", Via case Nuove 27, 53021 Abbadia San Salvatore (SI), Italy, email:
cristiano.bocci@gmail.com .} \hspace{1mm} and Filip Cools \footnote{K.U.Leuven,
Department of Mathematics, Celestijnenlaan 200B, B-3001 Leuven,
Belgium, email: Filip.Cools@wis.kuleuven.be .}}
\maketitle {\footnotesize \emph{\textbf{Abstract.} Let $T$ be a weighted tree with $n$ numbered leaves and let $D = (D(i,j))_{i,j}$ be its distance matrix, so $D(i,j)$ is the distance between the leaves $i$ and $j$. If $m$ is an integer satisfying $2\leq m\leq n$, we prove a tropical formula to compute the $m$-dissimilarity map of $T$ (i.e. the weights of the subtrees of $T$ with $m$ leaves), given $D$. For $m=3$, we present a tropical description of the set of $m$-dissimilarity maps of trees. For $m=4$, a partial result is given. \\ \\
\indent \textbf{MSC.} 05C05, 05C12, 14M15, 14Q99, 15A99, 92B05}}
\\ ${}$
\maketitle

%%%%%%%%%%%%%%%%%%%%%%%%%%%%%%%%%%%%%%%%%%%%%%%%%%%%%%%%%%%%%%%%%%%%%%%%%%%%%%%%%%%%%%%%%%%%%%%%%%%%%%%%%%%%%%%%%%%%%%%%%%%%%%%%%

\section{Introduction}

Let $D$ be a matrix whose rows and columns are indexed by a set $X$.
We assume that $D$ is symmetric and has zero entries on the main
diagonal. In phylogenetics, these kind of matrices are called {\it
dissimilarity matrices} . Usually, we take $X=[n]:=\{1, 2, \dots,
n\}$. Hence a dissimilarity matrix $D$ can also be seen as a map
$D:[n]^2\to \RR$, with $D(i,j)=D(j,i)$ and $D(i,i)=0$ for each $i,j\in[n]$. 

A {\it metric} is a non-negative dissimilarity matrix which satisfies the
triangle inequality $D(i,j)\leq D(i,k)+D(k,j)$ for all $i,j,k\in X$.
 
We say that $D$ has a {\it graph realization} if there is a weighted graph 
(so a non-negative weight is assigned to each edge) 
whose node set contains $X$ and such that the distance (i.e. the length of
the shortest path) between nodes $i,j \in X$ is exactly $D(i,j)$. A {\it distance
matrix} is a non-negative dissimilarity matrix that has a graph
realization. In \cite{Chu, Hak}, one can find some results on these kind of matrices.

In the case the graph is a tree and $X$ corresponds to
the set of leaves, $D$ is called a {\it tree metric}. This case has
been studied intensively and is well understood. The main
result is the following (see \cite{Bun} or \cite[Theorem 2.36]{PaSt}).

\begin{Thm}[Tree Metric Theorem] \label{thm tree metric}
Let $D$ be a non-negative dissimilarity matrix on $[n]$. Then $D$ is
a tree metric on $[n]$ if and only if, for every four (not
necessarily distinct) elements $i,j,k,l\in [n]$, the maximum
of the three numbers $D(i,j)+D(k,l)$, $D(i,k)+D(j,l)$ and
$D(i,l)+D(j,k)$ is attained at least twice. Moreover, the tree $T$
with leaves $[n]$ that realizes $D$ is unique.
\end{Thm}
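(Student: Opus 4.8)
The plan is to prove the Tree Metric Theorem in both directions, treating the "four-point condition" (the max attained twice) as the central object.

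**Forward direction.** First I would assume $D$ is a tree metric realized by a weighted tree $T$, and verify the four-point condition directly. Fix four leaves $i,j,k,l$. The key observation is that any four leaves of a tree span a subtree whose internal structure is, up to degenerate cases, a single quartet topology: two of the leaves are joined to one internal node, the other two to another internal node, and these two internal nodes are connected by a (possibly length-zero) path. Suppose the induced topology groups $\{i,j\}$ against $\{k,l\}$. Then I would compute each of the three pairwise sums in terms of the edge lengths of this subtree. The crucial point is that $D(i,k)+D(j,l)$ and $D(i,l)+D(j,k)$ both traverse the central path connecting the two internal nodes, so they are equal and each exceeds $D(i,j)+D(k,l)$ by twice the length of that central path. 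Hence the maximum of the three sums is attained (at least) twice, by the two sums that cross the middle. This handles the generic case; the degenerate cases (central path of length zero, or coincident leaves) only make more of the sums equal, so the condition still holds.

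**Reverse direction.** This is the harder and more substantial part, and where I expect the main obstacle to lie. Assuming $D$ satisfies the four-point condition, I must construct a tree $T$ realizing it. The natural approach is induction on $n$. For small $n$ (say $n \le 3$) one builds an explicit star-shaped tree; the triangle-type identities let me solve for the three pendant edge lengths. For the inductive step, I would remove one leaf, say leaf $n$, apply the induction hypothesis to get a tree $T'$ on $[n-1]$, and then reattach leaf $n$ by determining where along $T'$ its attachment point sits and how long its pendant edge should be. The four-point condition involving $n$ and triples from $[n-1]$ should pin down the attachment location uniquely: comparing the three sums for quadruples containing $n$ tells me which edge of $T'$ the new leaf branches off from and at what point. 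The delicate bookkeeping is showing that this attachment is consistent — that the location predicted by one triple agrees with that predicted by another — and that all resulting edge lengths are non-negative; this consistency is exactly what the four-point condition guarantees, but verifying it cleanly is the crux of the argument.

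**Uniqueness.** Finally I would argue uniqueness of the realizing tree. Here the strategy is to recover the combinatorial tree structure and the edge lengths purely from $D$. The split/topology of each quartet is determined by which pair of sums achieves the strict maximum (or, in degenerate cases, by equality), and these quartet topologies collectively determine the tree's branching structure; the internal and pendant edge lengths are then recovered by the four-point differences. Since every ingredient of $T$ is forced by $D$, two trees realizing the same $D$ must coincide. I would be careful to treat reduced trees (no degree-two internal vertices, positive internal edge lengths) so that "unique" is unambiguous, noting that degenerate quartets correspond precisely to contracted internal edges.

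The step I expect to be the genuine obstacle is the reverse direction's consistency check in the inductive reattachment: ensuring the new leaf's attachment point and pendant length are well-defined independent of which auxiliary leaves are used to locate them. Everything else is either a direct computation (forward direction) or a recovery argument (uniqueness), but the reattachment requires the four-point hypothesis to do real work in forcing global coherence from local quartet data.
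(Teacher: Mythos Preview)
The paper does not prove this theorem at all: it is stated as a background result and attributed to Buneman \cite{Bun} and to Pachter--Sturmfels \cite[Theorem 2.36]{PaSt}. So there is no ``paper's own proof'' to compare against; the authors simply import the statement from the literature.

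Your outline is a correct sketch of the standard argument and matches the classical proofs in the cited references. The forward direction is exactly the usual quartet computation. For the reverse direction, induction on $n$ with reattachment of a leaf is one of the standard routes (an alternative, also common in the literature, is to locate a cherry using the four-point condition and peel it off); your identification of the consistency check for the attachment point as the crux is accurate. The uniqueness argument via quartet splits determining the tree topology and edge lengths is likewise the standard one. Nothing in your plan is wrong, but be aware that in the context of this paper no proof is expected: the Tree Metric Theorem is used as a black box.
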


The condition of the theorem is called the {\it four-point
condition}. It is a necessary and sufficient condition on a matrix
to be realized by a tree.

Tree metrics on $n$ leaves are parameterized by the {\it space of trees} $\TN \subset \RR^{n \choose 2}$. 
The following result gives us a description of $\TN$ (see \cite{BHV}). 

\begin{Thm}
The space of trees $\TN$ is the union of $(2n-5)!!=1.3.5\ldots.(2n-5)$ orthants isomorphic to $\RR^{2n-3}_{\geq 0}$.
More precisely, $\TN$ is a simplicial fan of pure dimension $2n-3$ in $\RR^{n \choose 2}$.
\end{Thm}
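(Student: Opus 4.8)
The plan is to realize each maximal cone of $\TN$ explicitly through the \emph{split decomposition} of a tree metric and then verify the fan axioms. Recall that removing an edge $e$ from a tree $T$ with leaf set $[n]$ partitions $[n]$ into two blocks $A_e\mid B_e$, a \emph{split}; to it I associate the split metric $\delta_e\in\RR^{n \choose 2}$ with $\delta_e(i,j)=1$ when $e$ lies on the path from $i$ to $j$ and $\delta_e(i,j)=0$ otherwise. Since $D(i,j)$ is the total weight of the edges on that path, the tree metric of a weighting $(w_e)_e$ of $T$ is $D=\sum_e w_e\,\delta_e$. First I would fix a trivalent (binary) topology $\tau$: a standard count shows such a tree has exactly $n$ pendant and $n-3$ internal edges, hence $2n-3$ edges in total, and I define $C_\tau$ to be the image of the linear map $(w_e)_e \mapsto \sum_e w_e \delta_e$ on $\RR^{2n-3}_{\geq 0}$.

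The first substantive step is to show that the $2n-3$ split metrics $\{\delta_e\}$ attached to $\tau$ are linearly independent, so that $C_\tau$ is a simplicial cone of dimension $2n-3$ linearly isomorphic to $\RR^{2n-3}_{\geq 0}$. I would prove this by exhibiting each weight $w_e$ as an explicit linear functional of $D$: a pendant weight at leaf $i$ is recovered by the three-point formula $w=\tfrac12\bigl(D(i,j)+D(i,k)-D(j,k)\bigr)$ for suitable $j,k$, and an internal weight by an analogous four-point combination, so the parameterization is injective and the rays are independent. By the Tree Metric Theorem every tree metric arises from some weighted tree, and a metric whose realizing tree is non-binary is exactly one where certain internal weights vanish; thus $\TN=\bigcup_\tau C_\tau$ over binary $\tau$, with non-binary trees supplying boundary points.

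Next I would establish the fan structure. The uniqueness clause of Theorem \ref{thm tree metric} says that the realizing tree, and therefore the set of splits carrying positive weight, is determined by $D$; moreover these splits are pairwise compatible, since they come from a single tree. Hence the minimal face of the collection containing a given $D$ is spanned exactly by the split metrics of positive weight, every face of $C_\tau$ is obtained by contracting a subset of edges and is again one of these cones, and the intersection $C_{\tau_1}\cap C_{\tau_2}$ is the subcone generated by the splits common to $\tau_1$ and $\tau_2$ — a common face of both. These are precisely the fan axioms, and since every maximal cone has dimension $2n-3$ the fan is pure of that dimension. I expect this gluing step to be the crux: its content is that compatible split systems index the faces coherently, which is the combinatorial heart of the statement and is where the uniqueness in Theorem \ref{thm tree metric} is essential.

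Finally I would count the maximal cones by induction on $n$. For $n=3$ there is a single trivalent tree, matching $(2\cdot 3-5)!!=1$. A binary tree on $n-1$ leaves has $2(n-1)-3=2n-5$ edges, and attaching leaf $n$ by subdividing one of them produces every binary tree on $n$ leaves exactly once (deleting leaf $n$ and suppressing the freed degree-two vertex inverts the construction). The count therefore multiplies by $2n-5$ at each stage, giving $\prod_{k=3}^{n}(2k-5)=(2n-5)!!$ maximal orthants, as claimed.
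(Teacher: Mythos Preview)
The paper does not supply a proof of this theorem; it simply quotes the result and attributes it to Billera--Holmes--Vogtman \cite{BHV}. So there is no in-paper argument to compare against.

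Your outline is the standard route and is sound. The split-metric decomposition $D=\sum_e w_e\delta_e$, the edge count $2n-3$ for a trivalent topology, and the recovery of each $w_e$ via three- and four-point linear combinations are all correct, and together they give the injectivity that makes each $C_\tau$ a simplicial $(2n-3)$-cone. Your verification of the fan axioms hinges on the uniqueness clause of Theorem~\ref{thm tree metric}, which is exactly the right tool: it forces the set of positively weighted splits to be an invariant of $D$, so intersections of the $C_\tau$ are common faces. The induction for the count $(2n-5)!!$ via attaching the last leaf to any of the $2n-5$ edges of a binary $(n-1)$-tree is the classical enumeration and is correctly stated. This is essentially the argument one finds in \cite{BHV}, so while the paper itself offers nothing to compare to, your proposal matches the source it cites.
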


We can consider a generalisation of the concept of dissimilarity matrix. Let $m\leq n$ be an integer. A map $D:[n]^m\to \RR$ 
is called an {\it $m$-dissimilarity map} if $$D(i_1,\dots,i_m)=D(i_{\pi(1)},\dots,i_{\pi(m)})$$ for all permutations $\pi \in
S_m$ and $D(i_1, i_2,\ldots, i_m)=0$ if the numbers $i_1,\ldots,i_m$ are not pairwise distinct. 

We say that $D$ is realized by a tree $T$ if the leaf set of $T$ is $[n]$ and if for each $m$-subset $V=\{i_1,\ldots,i_m\}\subset [n]$,
the weight of the smallest subtree of $T$ containing $V$ is equal to $D(i_1,\ldots,i_m)$.  
An important result on $m$-dissimilarity maps of trees is given in \cite{PaSp}.

\begin{Thm}\label{thm pachter-speyer}
Let $T$ be a tree with $n$ leaves and no vertices of degree 2. Let
$m\geq 3$ be an integer. If $n\geq 2m-1$, then $T$ is uniquely determined by
its $m$-dissimilarity map $D$. If $n=2m-2$, this is not true.
\end{Thm}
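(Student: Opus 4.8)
The plan is to reduce the positive statement to the Tree Metric Theorem (Theorem \ref{thm tree metric}): it suffices to show that when $n \geq 2m-1$, the $m$-dissimilarity map $D$ determines the pairwise distances $D(i,j)$, since these determine $T$ uniquely. The basic tool is the \emph{split decomposition} of subtree weights: each edge $e$ of $T$ induces a bipartition (split) $A_e \mid B_e$ of the leaf set $[n]$, and for every $V \subseteq [n]$ one has $D(V) = \sum_e \ell_e \, \mathbf{1}[V \cap A_e \neq \emptyset \text{ and } V \cap B_e \neq \emptyset]$, where $\ell_e \geq 0$ is the length of $e$; that is, an edge contributes its length to $D(V)$ precisely when $V$ has leaves on both sides of it. Thus $D$ is a linear function of the edge lengths whose coefficients are governed by the combinatorics of the splits, and the whole problem becomes recovering the splits together with their lengths.

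The first key observation is purely numerical: $n \geq 2m-1$ if and only if every bipartition of $[n]$ into two nonempty blocks has at least one block of size $\geq m$. In particular every split $A_e \mid B_e$ of $T$ has an $m$-subset contained entirely in one block, so the split is \emph{resolvable}: there exist $m$-sets avoiding $e$ and $m$-sets crossing it. I would exploit this through difference identities such as
$$D(\{i\} \cup S) + D(\{j\} \cup S) - D(\{i,j\} \cup S) - D(S) = \sum_{e:\, S \subseteq \text{one side},\ \{i,j\} \subseteq \text{other side}} \ell_e,$$
valid for any $(m-1)$-subset $S$ disjoint from $\{i,j\}$, whose right-hand side is the total length of the ``bridge'' joining the subtree on $S$ to the path between $i$ and $j$. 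By varying $S$ over subsets lying in a common block of a split — which resolvability guarantees is possible — these bridge quantities let me first reconstruct the split system (the topology of $T$) and then, for a fixed topology, set up a linear system expressing the $m$-subtree weights in terms of the $\ell_e$. The condition $n \geq 2m-1$ is exactly what makes the coefficient matrix $\big(\mathbf{1}[V \text{ crosses } e]\big)_{V,e}$ of full column rank, so the lengths $\ell_e$, and hence all pairwise distances, are uniquely determined.

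The main obstacle is the rank/injectivity step: showing that distinct edges are separated by some $m$-subset and that the resulting linear system has a unique solution, which is where the tight bound $2m-1$ must be used rather than a mere dimension count (note that $\binom{n}{m}$ exceeds the number $2n-3$ of edges already for much smaller $n$). This is also exactly what fails when $n = 2m-2$: the central split $A \mid B$ with $|A| = |B| = m-1$ has \emph{both} blocks of size $m-1 < m$, so every $m$-subset meets both $A$ and $B$ and the length of that central region contributes the same amount to every $D(V)$, creating a hidden degree of freedom. I would make the negative statement explicit by constructing two non-isomorphic weighted trees with equal $m$-dissimilarity maps. For $m = 3$, $n = 4$ this is already visible: the star with pendant lengths $a,b,c,d$ and the binary tree with quartet $12\mid 34$, internal length $f$ and pendant lengths $a - \tfrac{f}{3},\, b-\tfrac{f}{3},\, c-\tfrac{f}{3},\, d-\tfrac{f}{3}$ have identical triple weights $D(i,j,k) = \tfrac12\big(D(i,j)+D(i,k)+D(j,k)\big)$, yet are different trees. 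For general $m$ I would produce the analogous pair by redistributing the length of the unresolvable central $(m-1 \mid m-1)$ split between an internal edge and the pendant edges, verifying via the split decomposition that no $m$-subtree weight changes.
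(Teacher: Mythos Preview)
The paper does not give its own proof of this theorem: it is stated as a result of Pachter and Speyer and cited to \cite{PaSp}, so there is no in-paper argument to compare against. I will therefore just comment on the soundness of your proposal.

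The negative part for $n=2m-2$ is fine. Your observation that the central $(m-1\mid m-1)$ split is crossed by \emph{every} $m$-subset is the right mechanism, and the explicit $m=3$, $n=4$ pair (star versus quartet with redistributed length) checks out; the general construction you sketch works for the same reason.

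The positive part has a genuine gap. Your key tool is the identity
\[
D(\{i\}\cup S)+D(\{j\}\cup S)-D(\{i,j\}\cup S)-D(S)\;=\;\sum_{e:\,S\subset\text{one side},\;\{i,j\}\subset\text{other}}\ell_e,
\]
with $|S|=m-1$. As an identity among subtree weights this is correct, but two of the four terms on the left are \emph{not} values of the $m$-dissimilarity map: $D(S)$ is an $(m{-}1)$-weight and $D(\{i,j\}\cup S)$ is an $(m{+}1)$-weight. Since the only data available is $D$ on $m$-subsets, you cannot compute the bridge quantities this way, and the rest of your plan (recovering the split system, then solving for the $\ell_e$) rests on exactly these quantities. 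You need an identity that uses only $m$-subset weights.

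The standard fix, which is essentially what Pachter and Speyer do, is to work with $(m-2)$-subsets $R$ and the derived ``distances'' $D_R(i,j):=D(R\cup\{i,j\})$ on $[n]\setminus R$. These do satisfy the four-point condition (this is precisely the content of Proposition~\ref{wm} in the paper, via $\TT(\trop(p_{R,ijkl}))$), so each $D_R$ encodes a tree metric on $[n]\setminus R$ up to an additive shift. The hypothesis $n\ge 2m-1$ is then used to show that as $R$ varies there are enough overlapping $D_R$'s to pin down the full pairwise metric, after which Theorem~\ref{thm tree metric} finishes. Your split-resolvability observation (``every split has a block of size $\ge m$'') is morally the same inequality, but it needs to be fed into an identity that only consumes $m$-weights.
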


In this paper, we give a description of a map $\phi^{(m)}:\RR^{n\choose 2}\to\RR^{n\choose 3}$, sending the distance matrix of a tree $T$ to its corresponding $m$-dissimilarity map (see Theorem \ref{thm description phi} in Section \ref{section phi}). In Section \ref{section m=3}, we investigate the case $m=3$. In particular, we show that $\phi^{(3)}(\TN)$ is equal to the intersection of the tropical Grassmannian $\mathcal{G}_{3,n}$ with a linear space (see Theorem \ref{thm m=3}). In Section \ref{section m=4}, we give a partial result on the case $m=4$. An introduction to tropical gemetry is given in Section \ref{section tropical}.

To finish this section, we describe the relation with Phylogenetics. A classical problem in computational biology is to construct a phylogenetic tree from a sequence alignment of $n$ species

\begin{center}
\begin{tabular}{ll}
Species 1        &\texttt{\small
ACAATGTCATTAGCGATACGTAGGTACGATGC...} \\
Species 2   &\texttt{\small
ACGTTGTCAATAGAGATTTTGGATGAACGATA...} \\
Species 3      &\texttt{\small
ACGTAGTCATTACACATTCTGGATTAACGTTA...} \\
Species 4    &\texttt{\small
GCACAGTCAGTAGAAGCTATGGTACATCGATC...} \\
$\qquad \vdots$ & $\quad \vdots \qquad \vdots \qquad \vdots \qquad \vdots \qquad \vdots \qquad \vdots \qquad \vdots$\\
Species n    &\texttt{\small
GAACTGTCAGTAGAAGCGAGTGTACATTCGTT...} 
\end{tabular}
\end{center}

The main technique to select a tree model is computing the \textit{maximum likelihood estimate} (MLE) for each of the $(2n-5)!!$ trees. Unluckily, all the MLE computations are very difficult, even for a single tree, and this approach requires examining all exponentially many trees.

A popular way to avoid this problem is the so-called \textit{distance based approach}, where one collapses the data to a dissimilarity matrix and obtains a tree via a projection onto tree space $\TN$ (by using the neighbor-joining algorithm). In fact, for such sequence data, computational biologists infer the distance between any two taxa. Thus, an interesting problem of phylogenetics concerns the construction of a weighted tree which represents this distance matrix, provided such a tree exists. 

More general, we may think of an $m$-dissimilarity map as a measure of how dissimilar each subset of $m$ species is. As a generalization of the previous problem, we can search for a weighted tree such that the $m$-subtree weights represent the entries of the $m$-dissimilarity map.
This problem has some natural relevance in Phylogenetics. Indeed, for example, it can be more reliable statistically to estimate the triple weights $D(i,j,k)$ rather than the pairwise distances $D(i,j)$ (\cite{PaSp}, \cite{PaSt}).

\section{Tropical geometry} \label{section tropical}

The tropical semiring $(\RR\cup \{-\infty\}, \oplus, \otimes )$ is
the set of real numbers completed with $-\infty$, equiped with two binary operations: 
the tropical sum is the maximum of two numbers and the tropical multiplication is
the ordinary sum.

Tropical monomials $x_1^{a_1}\cdots x_k^{a_k}$ represent ordinary
linear forms $\sum_{i=1}^k a_i x_i$ and tropical polynomials
\begin{equation}  \label{troppoly}
\bigoplus_{a \in A} \lambda_a \otimes x_1^{a_1}\otimes \cdots
\otimes x_k^{a_k},
\end{equation}
with $A \subset \NN^k$ finite and
$\lambda_a \in \RR$, represent piecewise-linear convex functions 
\begin{equation} \label{function}
F:\RR^k
\to \RR:(x_1,\ldots,x_k)\mapsto \max_{a\in A}\{\lambda_a+\sum_{i=1}^k a_i x_i\}.
\end{equation}

Now let $K$ be the {\it field of Puiseux series}, i.e. the field of formal power series 
$a=\sum_{q\in\mathbb{Q}}\,a_q t^q$ in the variable $t$ such that the set $Q_a=\{q\in\mathbb{Q}\,|\,a_q\neq 0\}$ 
is bounded below and has a finte set of denominators. For such an $a$, the infimum of $Q_a$ is equal to the minimum and we
call it the {\it valuation} $\val(a)$ of $a$.

A polynomial $$f(x_1,\cdots,x_k)=\sum_{a\in A}\,g_a(t)x_1^{a_1}\cdots x_k^{a_k}\in K[X]$$ 
gives rise to the tropical polynomial in \eqref{troppoly}, where $\lambda_a=-\val(g_a(t))$. We denote this tropical polynomial by $\trop(f)$.

We define the {\it tropical hypersurface} $\TT(F)=\TT(\trop(f))$ as the corner locus of
the function $F$ in \eqref{function}, i.e. the set of $x=(x_1,\dots,x_k)\in \RR^k$
such that the maximum of the collection of numbers
$$\left\{\sum_{i=1}^k a_i x_i+\lambda_a\right\}_{a\in\mathcal{A}}$$ is attained at
least twice.

\begin{Thm} \label{Thm def tropical variety}
If $I\subset K[x_1,\dots,x_n]$ is an ideal, the following two subsets of $\RR^k$ coincide:
\begin{enumerate}
\item the intersection of all tropical hypersurfaces $\TT(\trop(f))$ with $f\in I$;
\item the closure in $\RR^k$ of the set $$\{(-\val(y_1),\ldots,-\val(y_k))\,|\,(y_1,\ldots,y_k)\in V(I)\}\subset \QQ^k.$$ 
\end{enumerate}
\end{Thm}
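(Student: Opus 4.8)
The plan is to prove the Fundamental Theorem of Tropical Geometry, which equates the "algebraic" tropicalization (the set of coordinatewise valuation vectors of points on the variety $V(I)$) with the "combinatorial" tropicalization (the intersection of all tropical hypersurfaces coming from elements of $I$). I would structure the argument as two inclusions.

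The easier inclusion shows that the valuation set is contained in the intersection of tropical hypersurfaces. First I would take a point $y = (y_1,\dots,y_k) \in V(I)$ and set $x_i = -\val(y_i)$. For any $f = \sum_{a\in A} g_a(t) x_1^{a_1}\cdots x_k^{a_k} \in I$, the identity $f(y) = 0$ means the terms $g_a(t)\, y_1^{a_1}\cdots y_k^{a_k}$ sum to zero in $K$. The key observation is that the valuation of this whole sum is $+\infty$, so the minimal valuation among the individual terms must be attained by at least two of them — otherwise the unique lowest-valuation term could not cancel. Since $\val(g_a(t) y_1^{a_1}\cdots y_k^{a_k}) = \val(g_a) + \sum_i a_i \val(y_i) = -\lambda_a - \sum_i a_i x_i$, the minimal valuation corresponds exactly to the maximum of $\{\lambda_a + \sum_i a_i x_i\}$, and this maximum being attained twice is precisely the condition $x \in \TT(\trop(f))$. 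Because $f$ was arbitrary in $I$, the point $x$ lies in the intersection, and since the intersection is closed it also contains the closure.

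The harder and more technical inclusion is the reverse: every point $w$ in the intersection of all tropical hypersurfaces is a limit of valuation vectors of honest points of $V(I)$. I would reduce first to the hypersurface case by invoking a theorem of Kapranov, which handles $I = (f)$ via an explicit lifting argument: given $w$ with the max in $\trop(f)$ attained twice, one constructs a Puiseux-series point $y$ with $\val(y_i) = -w_i$ on which $f$ vanishes, typically by a Newton-polygon / Hensel-type successive-approximation construction that solves for the leading terms and then lifts order by order. The general ideal case then follows by a Gröbner-theoretic argument: one shows the combinatorial tropicalization depends only on finitely many initial ideals, and using the fact that over an algebraically closed valued field (which $K$ is, being the field of Puiseux series over $\CC$) the relevant projections and compactness statements hold, one concludes that any such $w$ is approximated by genuine points. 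The main obstacle is precisely this lifting step — producing actual points of $V(I)$ with prescribed valuations — since it requires the algebraic closedness of $K$ together with a constructive or homological argument that the initial degenerations do not obstruct lifting.

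Given that this statement is the standard Fundamental Theorem (due to the combined work of Kapranov, Speyer–Sturmfels, and others), in a paper of this scope I would most likely not reprove it from scratch but rather cite the foundational reference where both inclusions are established in full generality. The productive role of the above sketch is to isolate the one genuinely elementary half — the valuation-cancellation argument giving "max attained twice" — which is short and self-contained, and to flag that the converse lifting direction is the substantive content that justifies citing rather than reproving.
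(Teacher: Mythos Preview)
Your proposal is correct and matches the paper's approach: the paper does not prove this theorem at all but simply cites \cite[Theorem 2.1]{SS1}, exactly as you anticipate in your final paragraph. Your additional sketch of the two inclusions is accurate and goes beyond what the paper provides, but the bottom line---defer to the Speyer--Sturmfels reference rather than reprove the Fundamental Theorem---is identical.
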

\begin{proof} See \cite[Theorem 2.1]{SS1}.
\end{proof}

For an ideal $I\subset K[x_1,\ldots,x_k]$, we denote by $\TT(I)\subset \RR^k$ the set 
mentioned in Theorem \ref{Thm def tropical variety}. It is called the {\it tropical variety} of the ideal $I$.

\begin{Def} If $\TT(I)\subset \RR^k$ is a tropical variety, we say that $\{f_1,\ldots,f_r\}$ is a tropical basis of $\TT(I)$ if and only if $I=\langle f_1,\ldots,f_r\rangle$ and $$\TT(I)=\TT(\trop(f_1))\cap \cdots \cap \TT(\trop(f_r)).$$  
\end{Def}

\begin{Rmk} {\fontshape{n}\selectfont
In general, a set of generators of an ideal $I$ is not a tropical basis for $\TT(I)$. Of course, the singleton $\{f\}$ is a tropical basis for the tropical hypersurface $\TT(\trop(f))$. 
}\end{Rmk}

We are mainly interested in the tropical variety $\TT(I_{m,n})$,
where $I_{m,n}$ is the ideal of the {\it Grassmannian} $G(m,n)\subset
\RR^{n \choose m}$. To be more precise, we fix a polynomial ring $$\ZZ[x] = \ZZ[x_{i_1i_2\cdots i_d}\,|\,1\leq i_1<i_2 <\cdots <i_m\leq n]$$ in
${n \choose m}$ variables with integer coefficients. The Pl\"ucker ideal $I_{m,n}$ is the prime ideal in $\ZZ[x]$,
consisting of the algebraic relations among the determinants of the $(m\times m)$-minors of any $(m\times n)$-matrix with entries in a
commutative ring. It is well-known that $I_{m,n}$ is generated by quadrics (see for example \cite{Stu}).

The affine variety defined by $I_{m,n}$ is the Grassmannian $G(m,n)\subset \RR^{n \choose m}$, which parameterizes all
$m$-dimensional linear subspaces of an $n$-dimensional vector space. It has dimension $(n-m)m+1$.

\begin{Def}
The tropical variety $\TT(I_{m,n})$ is called a tropical Grassmannian and is denoted by $\GM$.
\end{Def}

\begin{Thm}
The tropical Grassmannian $\GM$ is a polyhedral fan in $\RR^{n
\choose m}$. Each of its maximal cones has the same dimension,
namely $(n-m)m+1$.
\end{Thm}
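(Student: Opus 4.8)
The plan is to deduce both assertions from the general structure theory of tropical varieties of prime homogeneous ideals, using the two facts already recorded above: that $I_{m,n}$ is prime (so that $G(m,n)=V(I_{m,n})$ is irreducible) and that this affine variety has dimension $(n-m)m+1$.

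First I would address the fan structure. The key input is that $I_{m,n}$ is generated by quadrics and hence is homogeneous. For a weight vector $w\in\RR^{n\choose m}$ one forms the initial ideal $\mathrm{in}_w(I_{m,n})$, obtained by keeping in each polynomial of the ideal the terms of maximal $w$-weight. The set of all $w$ sharing a fixed initial ideal is the relative interior of a polyhedral cone, and these cones glue into the Gr\"obner fan of $I_{m,n}$. By Theorem \ref{Thm def tropical variety}, a point of the form $(-\val(y_1),\ldots,-\val(y_{n\choose m}))$ coming from a zero $y\in V(I_{m,n})$ lies in $\TT(I_{m,n})$ exactly when no polynomial of $I_{m,n}$ tropicalizes to a function attaining its maximum at a single term, i.e. exactly when $\mathrm{in}_w(I_{m,n})$ contains no monomial. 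Since this monomial-freeness depends only on which cone of the Gr\"obner fan contains $w$, the tropical Grassmannian $\GM$ is a union of such cones and inherits a polyhedral structure. Homogeneity moreover guarantees that every cone is closed under positive scaling and under translation by the all-ones vector, so the structure is genuinely a fan with apex at the origin.

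Next I would establish purity together with the dimension count. Here the decisive tool is the Bieri--Groves theorem: the tropical variety of a prime ideal whose zero set is irreducible of Krull dimension $d$ is a polyhedral complex all of whose maximal faces have dimension $d$. Applying this with $d=(n-m)m+1$, the dimension of $G(m,n)$ quoted above, yields that every maximal cone of $\GM$ has dimension exactly $(n-m)m+1$. Combined with the fan structure from the previous step, this is precisely the assertion.

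I expect the main obstacle to be the purity statement, that is, the Bieri--Groves theorem itself. The fan structure is essentially bookkeeping once the Gr\"obner fan is in place, and the numerical value $(n-m)m+1$ is handed to us as the dimension of the Grassmannian. By contrast, ruling out maximal cones of dimension strictly smaller than $d$ requires genuine input from commutative algebra and valuation theory and does not follow from the mere set-theoretic description of $\TT(I_{m,n})$ in Theorem \ref{Thm def tropical variety}. In a self-contained treatment one would either import Bieri--Groves as a black box, or instead exploit the explicit Gr\"obner-basis structure of the Pl\"ucker ideal to exhibit a $d$-dimensional piece in each maximal cone; but even this second route ultimately rests on the dimension formula for $G(m,n)$ and on the connectedness-in-codimension-one of tropicalizations of irreducible varieties.
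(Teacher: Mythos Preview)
Your sketch is correct, and in fact it reconstructs essentially the argument that the paper defers to: the paper does not prove this theorem at all but simply cites \cite[Corollary~3.1]{SS1}. In that reference the proof proceeds exactly along the lines you describe --- the tropical variety of a homogeneous ideal is a subfan of its Gr\"obner fan, and the Bieri--Groves theorem supplies the purity and the dimension equal to $\dim G(m,n)=(n-m)m+1$. So there is no discrepancy in approach; you have just filled in what the paper outsources.
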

\begin{proof}
See \cite[Corollary 3.1.]{SS1}.
\end{proof}

Now we are going to fix our attention on the case $m=2$.

\begin{Ex}[$m=2$ and $n=4$] {\fontshape{n}\selectfont
The smallest non-zero Pl\"ucker ideal is the principal ideal
$I_{2,4}=(x_{12}x_{34}-x_{13}x_{24}+x_{14}x_{23})$. Thus
${\mathcal{G}}_{2,4}$ is a fan with three five-dimensional cones
$\RR^4\times\RR_{\leq 0}$ glued along $\RR^4$.
} \end{Ex}

\begin{Thm}\label{thm generators of I_{2,n}}
The ideal $I_{2,n}$ is generated by the quadratic polynomials
\begin{equation}\label{PLU}
p_{ijkl}:=\underline{x_{ik}x_{jl}}-x_{ij}x_{kl}-x_{il}x_{jk} \qquad (1\leq
i<j<k<l \leq n).
\end{equation}
These polynomials form the reduced Gr\"obner basis if the underlined terms are leading.
\end{Thm}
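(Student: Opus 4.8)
The plan is to treat the $p_{ijkl}$ as the classical three-term Pl\"ucker relations and to establish simultaneously that they generate $I_{2,n}$ and that they reduce to the announced Gr\"obner basis. First I would check that $p_{ijkl}\in I_{2,n}$. Writing the Pl\"ucker coordinate $x_{ij}$ as the minor $[ij]=a_ib_j-a_jb_i$ of a generic $(2\times n)$-matrix, a direct expansion yields the identity $[ik][jl]=[ij][kl]+[il][jk]$ for all $i<j<k<l$. Thus $p_{ijkl}$ is an algebraic relation among the $(2\times 2)$-minors and hence, by the definition of the Pl\"ucker ideal, lies in $I_{2,n}$. This is the routine part.

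Next I would fix a term order making the underlined monomial leading. Assign to the variable $x_{ij}$ the weight $w_{ij}=g(j-i)$ with $g$ strictly increasing and strictly concave on $\{1,\dots,n-1\}$ (for instance $g(s)=s(2n-s)$). Setting $a=j-i$, $b=k-j$, $c=l-k$, the three monomials of $p_{ijkl}$ carry weights $g(a)+g(c)$ (the disjoint term $x_{ij}x_{kl}$), $g(a+b)+g(b+c)$ (the crossing term $x_{ik}x_{jl}$) and $g(a+b+c)+g(b)$ (the nested term $x_{il}x_{jk}$). Monotonicity of $g$ gives crossing $>$ disjoint, and concavity gives $g(a+b)-g(b)>g(a+b+c)-g(b+c)$, i.e. crossing $>$ nested. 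Hence, after refining $w$ by an arbitrary monomial order, $\operatorname{in}(p_{ijkl})=x_{ik}x_{jl}$ for every $i<j<k<l$. Call a monomial \emph{standard} if it is divisible by no such $x_{ik}x_{jl}$, i.e. if it is a product of pairwise non-crossing pairs.

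The heart of the argument is the Gr\"obner property, which I would obtain by a squeeze. On one side, every monomial can be rewritten modulo $\langle p_{ijkl}\rangle$ as a $\ZZ$-combination of standard monomials: whenever a monomial is divisible by a crossing product one applies the corresponding relation, replacing it by strictly smaller monomials, and since the weighted order is a well-order this straightening terminates. Thus standard monomials span $\ZZ[x]/\langle p_{ijkl}\rangle$ in each degree. On the other side I would show the standard monomials are linearly independent modulo $I_{2,n}$, by counting the degree-$d$ multisets of pairwise non-crossing pairs and matching this count with the Hilbert function of the affine cone over $G(2,n)$ (of dimension $2n-3$). Since $\langle p_{ijkl}\rangle\subseteq I_{2,n}$, spanning modulo the smaller ideal together with independence modulo the larger one forces $\langle p_{ijkl}\rangle=I_{2,n}$ and $\operatorname{in}(I_{2,n})=\langle x_{ik}x_{jl}:i<j<k<l\rangle$, which is exactly the Gr\"obner basis assertion.

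Finally, reducedness is immediate: each $p_{ijkl}$ is monic in its leading term, the leading terms are distinct squarefree quadrics so none divides another, and both trailing monomials $x_{ij}x_{kl}$ (disjoint) and $x_{il}x_{jk}$ (nested) are themselves standard, hence divisible by no leading term. So no generator admits further reduction and the set is the reduced Gr\"obner basis. The main obstacle is the independence step, i.e. proving that the initial ideal is no larger than $\langle x_{ik}x_{jl}\rangle$. Either one carries out the Hilbert-function matching above, or one replaces it by Buchberger's criterion: the only critical pairs arise from two relations whose leading monomials share a variable, and verifying that each such S-polynomial straightens to zero is a finite but delicate case analysis over the relative positions of the indices, which is where the real work lies.
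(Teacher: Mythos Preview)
Your sketch is correct and follows the classical straightening-law route (non-crossing standard monomials, squeeze via Hilbert function or Buchberger), which is precisely the argument in Sturmfels' \emph{Algorithms in Invariant Theory}. The paper itself gives no independent proof: it simply cites \cite[Theorem~3.1.7 and Proposition~3.7.4]{Stu}, so your proposal is not a different approach but rather an unpacking of the reference the paper invokes.
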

\begin{proof}
See \cite[Theorem 3.1.7 and Proposition 3.7.4]{Stu}.
\end{proof}

For each quadruple $\{i,j,k,l\}\subset\{1,2,\dots, n\}$, we consider
the tropical polynomial
$$\trop(p_{ijkl}) = (x_{ij}\otimes x_{kl}) \oplus (x_{ik}\otimes x_{jl})
\oplus (x_{il}\otimes x_{jk}).$$
This polynomial defines a tropical hypersurface $\TT(\trop(p_{ijkl}))$. 
It turns out that the tropical Grassmannian $\GT$ is the intersection of these ${n \choose 4}$ hypersurfaces, so 
the quadrics $p_{ijkl}$ forms a tropical basis for $I_{2,n}$ (see \cite{SS1}).

Let $D$ be an dissimilarity matrix on $[n]$ and $\{i,j,k,l\}\subset [n]$. The maximum of the
three numbers $D(i,j)+D(k,l)$, $D(i,k)+D(j,l)$ and $D(i,l)+D(j,k)$
is attained at least twice if and only if $D\in \TT(\trop(p_{ijkl}))$. Thus Theorem \ref{thm tree metric} implies 
that a metric $D$ on $[n]$ is a tree metric if and only if $D$ belongs to $\TN$. In particular,
one has the following result.

\begin{Thm}
The space of trees $\TN$ is the tropical Grassmannian $\GT$.
\end{Thm}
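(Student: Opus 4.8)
The plan is to prove the set equality $\TN = \GT$ by chaining together three facts that have already been assembled in this section. Since $\GT = \TT(I_{2,n})$ is by definition the intersection of all tropical hypersurfaces $\TT(\trop(f))$ with $f \in I_{2,n}$, the first and most essential step is to replace this infinite intersection by the finite one coming from the generators $p_{ijkl}$ of Theorem \ref{thm generators of I_{2,n}}. This is exactly the assertion that the quadrics $p_{ijkl}$ form a tropical basis of $I_{2,n}$, which I would import from \cite{SS1}. It gives $$\GT = \bigcap_{1 \le i<j<k<l \le n} \TT(\trop(p_{ijkl})).$$

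Next I would read off what membership in this finite intersection means for a dissimilarity matrix $D \in \RR^{n \choose 2}$. From the shape of $\trop(p_{ijkl}) = (x_{ij}\otimes x_{kl}) \oplus (x_{ik}\otimes x_{jl}) \oplus (x_{il}\otimes x_{jk})$, the matrix $D$ lies on the hypersurface $\TT(\trop(p_{ijkl}))$ precisely when the maximum of the three numbers $D(i,j)+D(k,l)$, $D(i,k)+D(j,l)$ and $D(i,l)+D(j,k)$ is attained at least twice. Letting the quadruple $\{i,j,k,l\}$ range over all subsets of $[n]$, this shows that $D \in \GT$ if and only if $D$ satisfies the four-point condition of Theorem \ref{thm tree metric}.

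Finally, the Tree Metric Theorem states that the four-point condition characterizes tree metrics, and tree metrics are by definition the points of the space of trees $\TN$. Combining the three steps yields $D \in \GT \iff D \in \TN$, that is, $\TN = \GT$. The main obstacle is the tropical basis property used in the first step: it is the one genuinely non-formal ingredient, and I would rely on \cite{SS1} for it. Beyond that, the only thing to verify is that the two sides are compared in compatible coordinates on $\RR^{n \choose 2}$, so that the translation- and sign-stable four-point condition cuts out the same set on each side, in particular along the lineality directions $(a_i+a_j)_{ij}$ along which the distances may be shifted; this is a routine bookkeeping check once the tropical basis property is in hand.
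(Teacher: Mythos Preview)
Your proposal is correct and matches the paper's own argument essentially step for step: the paper's proof is simply ``See \cite[Theorem 4.2]{SS1} or the arguments above,'' and those preceding arguments are precisely (i) the tropical basis property of the $p_{ijkl}$ imported from \cite{SS1}, (ii) the translation of $D\in\TT(\trop(p_{ijkl}))$ into the four-point condition, and (iii) the appeal to the Tree Metric Theorem. Your closing remark about the lineality directions is a fair caveat that the paper itself does not spell out, deferring instead to \cite{SS1}.
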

\begin{proof}
See \cite[Theorem 4.2]{SS1} or the arguments above.
\end{proof}

Now we come back to the general case (so the case where $m\leq n$ is arbitrary). The ideal $I_{m,n}$ is
generated by quadratic polynomials, known as the Pl\"ucker
relations. Among these are the three-term Pl\"ucker relations
$$p_{R,ijkl}:=x_{Rik} x_{Rjl}- x_{Rij} x_{Rkl} - x_{Ril} x_{Rjk},$$
which are closely related to (\ref{PLU}). Hereby $R$ is any
$(m-2)$-subset of $[n]$ and $i, j, k, l \in [n]\setminus R$.

\begin{Def}
The three-term tropical Grassmannian $\TM$ is the intersection
$$
\TM:= \bigcap_{R,i,j,k,l} \TT(\trop(p_{R,ijkl}))\quad \subset \RR^{n \choose m}.
$$
\end{Def}

In general, the three-term Pl\"ucker relations do not generate
$I_{m,n}$. If $m=2$, then $S=\emptyset$ and ${\mathcal{T}}_{2,n}=\GT$. For $m\geq 3$, the tropical Grassmannian
$\GM$ is contained in $\TM$. This containment is proper for $n\geq m+4$. 

\section{A description on the $m$-subtree weight map} \label{section phi}

In this section, we are going to give an explicit description of a
map $$\phi^{(m)}:\RR^{n \choose 2} \to \RR^{n \choose m},$$ sending the
dissimilarity matrix $D$ of a tree $T$ to its $m$-dissimilarity map.

Let $\prec$ be the order relation on $\mathbb{N}^{\infty}$ defined
as follows. We have $$(a_1,a_2,a_3,\ldots)\prec
(b_1,b_2,b_3,\ldots)$$ if and only if there exists an $n\in \NN$
such that $a_i=b_i$ for all $i<n$ and $a_n<b_n$.

Let $T$ be a tree with $n$ leaves. Let $r$ be an inner node of $T$
and consider $T$ as a rooted tree (with root $r$). Let $\mathcal{N}$
be the set of nodes of $T$. In particular, the set of leaves
$[n]=\{1,\ldots,n\}$ is contained in $\mathcal{N}$.

\begin{Lem}There exists a map $\alpha:\mathcal{N}\to
\NN^{\infty}$ such that the following properties hold:
\begin{enumerate}
\item $\alpha$ is injective.
\item If $n\in \mathcal{N}$ is an ancestor of $m\in \mathcal{N}$, we
have $\alpha(m)\succ \alpha(n)$. So the root $r$ of $T$ gives rise
to the minimum of $\{\alpha(n)|n\in \mathcal{N}\}$.
\item If $n_1,n_2\in \mathcal{N}$ with $n_2$ not a descendant nor an ancestor of $n_1$, $m_1\in \mathcal{N}$ a descendant of $n_1$ and $m_2\in \mathcal{N}$ a descendant of $n_2$, we have $\alpha(m_1)\prec \alpha(m_2)$ if and only if
$\alpha(n_1)\prec \alpha(n_2)$.
\end{enumerate}
\end{Lem}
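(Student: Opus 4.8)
The plan is to realize $\alpha$ as a \emph{lexicographic address map}: each node is labelled by the sequence of ``turns'' made while walking from the root $r$ down to that node. Concretely, at every node of $T$ I first fix, once and for all, an arbitrary total order on its children and label them $1,2,3,\ldots$. For a node $v$ whose root-to-$v$ path descends through children in positions $a_1,a_2,\ldots,a_s$, I set
$$\alpha(v) = (a_1,a_2,\ldots,a_s,0,0,\ldots) \in \NN^{\infty},$$
padding the finite path with trailing zeros; in particular $\alpha(r)=(0,0,\ldots)$, since the path from $r$ to itself is empty. The crucial feature of this choice is that each genuine turn satisfies $a_i\geq 1$, so the padding zeros are strictly dominated by any real turn.

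With this definition, properties (1) and (2) are essentially bookkeeping. For injectivity, distinct nodes have distinct root-to-node paths (a path from the root determines its endpoint), and padding by zeros preserves distinctness: if one path is a proper prefix of another, the longer address carries a positive entry exactly where the shorter carries a padded $0$; otherwise the two sequences already differ within their finite parts. For (2), if $n$ is an ancestor of $m$, then the path to $n$ is a proper prefix of the path to $m$; the two addresses agree up to position $s$ (the length of the path to $n$), and at position $s+1$ the value for $n$ is a padding $0$ while the value for $m$ is a positive turn, so $\alpha(n)\prec\alpha(m)$, i.e. $\alpha(m)\succ\alpha(n)$. Taking $n=r$ exhibits $\alpha(r)$ as the minimum.

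The substantive step is (3), and the idea is to \emph{localize the comparison at the branch point}. Let $n_1,n_2$ be incomparable and let $w$ be their lowest common ancestor; write $(c_1,\ldots,c_t)$ for the common address prefix corresponding to the path from $r$ to $w$. Since $n_1,n_2$ are incomparable, $w$ is a proper ancestor of each, so the addresses of $n_1$ and $n_2$ both properly extend $(c_1,\ldots,c_t)$ and take \emph{different} turns $p\neq q$ at position $t+1$ (were they equal, the corresponding child of $w$ would be a common ancestor lower than $w$, contradicting minimality). Hence $\alpha(n_1)\prec\alpha(n_2)$ is decided precisely by whether $p<q$. Now any descendant $m_i$ of $n_i$ lies in the subtree rooted at $n_i$, so its address again begins with $(c_1,\ldots,c_t)$ followed by the very same turn $p$, respectively $q$, at position $t+1$; therefore $\alpha(m_1)\prec\alpha(m_2)$ is decided by the identical comparison $p<q$, and the two equivalences coincide. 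Because the subtrees rooted at $n_1$ and $n_2$ are disjoint, we have $m_1\neq m_2$, so all comparisons are strict and the ``if and only if'' is unambiguous.

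The only genuine obstacle is forcing a \emph{single} labelling to serve all three conditions simultaneously, since (2) wants ancestors to be small while (3) demands that incomparable subtrees never interleave in the order. The padding convention (positive turns versus padding zeros) resolves the first tension, and the lowest-common-ancestor localization resolves the second; once the address map is fixed, no step beyond these verifications requires real work.
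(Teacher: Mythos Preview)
Your construction is exactly the paper's: the paper defines $\alpha$ inductively by $\alpha(r)=(0,0,\ldots)$ and, for a node $n$ with $\alpha(n)=(a_1,\ldots,a_s,0,\ldots)$, setting $\alpha(m_i)=(a_1,\ldots,a_s,i,0,\ldots)$ for its children $m_1,\ldots,m_t$ --- which is precisely your lexicographic address map. Your verification of (1)--(3), especially the lowest-common-ancestor argument for (3), supplies details that the paper simply asserts with ``Note that all the properties hold.''
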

\begin{proof}
We will define $\alpha$ inductively. Take
$\alpha(r)=(0,0,0,\ldots)$. For the induction step, if
$\alpha(n)=(a_1,\ldots,a_s,0,0,\ldots)$ is defined for some $n\in
\mathcal{N}$ with $a_s\neq 0$ and if $m_1,\ldots,m_t$ are the
children of $n$, take $\alpha(n_i)=(a_1,\ldots,a_s,i,0,\ldots)$.
Note that all the properties hold and that the depth of $n\in
\mathcal{N}$ in $T$ is equal to the number of non-zero entries in
$\alpha(n)$.
\end{proof}

We say that the leaves of $T$ are {\it well-numbered} if and only if
$\alpha(i)\prec \alpha(j)$ for all $i<j$.

A permutation $\sigma\in S_m$ of $\{1,\ldots,m\}$ is called {\it cyclic} if and only if
the decomposition of $\sigma$ into a product of disjoint cycles
consists of only one cycle of order $m$. Denote the set of cyclic
permutations in $\mathcal{S}_m$ by $\mathcal{C}_m$. Note that
$\sigma^m=Id$ if $\sigma \in \mathcal{C}_m$.

\begin{Thm} \label{thm description phi}
Let $n$ and $m$ be integers such that $n>m\geq 2$. Let
$$\phi^{(m)}:\RR^{n \choose 2}\to \RR^{n \choose m}:
X=(X_{i,j})\mapsto (X_{i_1,\ldots,i_m})$$ be the map with
$$
X_{i_1,\ldots,i_m}=\frac12 \cdot \min_{\sigma \in \mathcal{C}_m}
\{X_{i_1,i_{\sigma(1)}}+X_{i_{\sigma(1)},i_{\sigma^2(1)}}+\ldots+X_{i_{\sigma^{m-1}(1)},i_{\sigma^m(1)}}\}.
$$
If $D\in \mathcal{G}_{2,n}\subset \mathbb{R}^{n \choose 2}$ is the dissimilarity matrix of an $n$-tree $T$, then
the $m$-dissimilarity map of $T$ is equal to $\phi^{(m)}(D)$. So the set of $m$-dissimilarity maps of $n$-trees is equal to $\phi^{(m)}(\mathcal{G}_{2,n})$.
\end{Thm}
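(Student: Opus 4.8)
The plan is to interpret the quantity inside the minimum geometrically. For a cyclic permutation $\sigma \in \mathcal{C}_m$, the sum $D(i_1,i_{\sigma(1)}) + D(i_{\sigma(1)},i_{\sigma^2(1)}) + \cdots + D(i_{\sigma^{m-1}(1)},i_1)$ is the total length of the closed walk in $T$ that visits the leaves $i_1, i_{\sigma(1)}, \ldots, i_{\sigma^{m-1}(1)}$ in this cyclic order, consecutive leaves being joined by the unique path between them in $T$. Writing $T'$ for the smallest subtree of $T$ containing the $m$ leaves, and noting that each such path stays inside $T'$, I would express the length of this closed walk as $\sum_{e} w(e)\, c_\sigma(e)$, where the sum runs over the edges $e$ of $T'$, $w(e)$ is the weight of $e$, and $c_\sigma(e)$ counts how many times the walk crosses $e$. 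Since $D(i_1,\ldots,i_m)$ is by definition the total edge weight of $T'$, the whole theorem reduces to showing that $\min_{\sigma}\sum_e w(e)\, c_\sigma(e) = 2\sum_e w(e)$.

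For the lower bound I would show that every edge is crossed at least twice by each such closed walk. Because $T'$ is the \emph{smallest} subtree containing the $m$ leaves, deleting any edge $e$ splits $T'$ into two components, each containing at least one of the leaves. A closed walk visiting all $m$ leaves therefore visits both sides of $e$ and, being closed, crosses $e$ an even number of times; hence $c_\sigma(e)\geq 2$ for every $\sigma$ and every $e$, giving $\sum_e w(e)\, c_\sigma(e) \geq 2\sum_e w(e)$. For the matching upper bound I would exhibit one cyclic order achieving $c_\sigma(e)=2$ for all $e$ simultaneously: fixing a root and performing a depth-first (Euler) traversal of $T'$, record the leaves in the order of their first visit. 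This yields a cyclic ordering of the $m$ leaves, hence an element $\sigma\in\mathcal{C}_m$, and the concatenation of the tree paths between consecutive leaves in this order retraces exactly the Euler tour, which crosses each edge of $T'$ precisely twice. Combining the bounds gives $\min_\sigma \sum_e w(e)\, c_\sigma(e) = 2\sum_e w(e)$, and dividing by $2$ yields the formula for $X_{i_1,\ldots,i_m}$.

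To conclude, ranging over all $m$-subsets $\{i_1,\ldots,i_m\}\subset[n]$ shows that $\phi^{(m)}(D)$ is the full $m$-dissimilarity map of $T$; letting $D$ range over $\GT$, which by the earlier theorem is exactly the space of tree metrics $\TN$, gives the final assertion that the set of $m$-dissimilarity maps of $n$-trees is $\phi^{(m)}(\GT)$. The case $m=2$ is degenerate and consistent: $\mathcal{C}_2$ has a single element and the formula returns $D(i_1,i_2)$, the weight of the path joining the two leaves.

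The main obstacle I anticipate is the bookkeeping for the crossing numbers $c_\sigma(e)$. The cleanest formalization is to observe that each edge $e$ of $T'$ two-colors the $m$ leaves according to which side of $e$ they lie on; since in a tree the path between two leaves crosses $e$ exactly once when they have different colors and never when they agree, $c_\sigma(e)$ equals the number of color changes read off the cyclic sequence $i_1, i_{\sigma(1)}, \ldots, i_{\sigma^{m-1}(1)}$. This number is automatically even, is at least $2$ because both colors occur, and equals exactly $2$ precisely when the leaves of one color form a contiguous arc of the cyclic sequence. Verifying that the depth-first order produces such a contiguous block for every edge is the one point requiring a short structural argument about Euler traversals, but it follows from the fact that the leaves lying below $e$ are visited as an uninterrupted stretch of the traversal.
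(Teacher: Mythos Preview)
Your proposal is correct and follows essentially the same approach as the paper: both arguments show that each edge of $T'$ is crossed at least twice by any cyclic walk through the leaves (via the split/bipartition argument you describe) and exactly twice for one specific cyclic order. Your depth-first leaf order is precisely what the paper calls a \emph{well-numbering} of the leaves (defined there via an auxiliary map $\alpha:\mathcal{N}\to\NN^{\infty}$ encoding the rooted tree structure), and your color-change count $c_\sigma(e)$ is exactly the paper's count of terms $D(i,j)$ in which the weight $w(e)$ appears.
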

\begin{proof} Write
$$f(X;\sigma;i_1,\ldots,i_m)=X_{i_1,i_{\sigma(1)}}+X_{i_{\sigma(1)},i_{\sigma^2(1)}}+
\ldots+X_{i_{\sigma^{m-1}(1)},i_{\sigma^m(1)}}.$$ Note that
$$f(X;\sigma;i_{\pi(1)},\ldots,i_{\pi(m)})=f(X;\pi\sigma\pi^{-1};i_1,\ldots,i_m)$$
for all $\pi\in \mathcal{S}_m$, hence
\begin{equation} \label{indep. of perm.}
\min_{\sigma \in \mathcal{C}_m}
\{f(X;\sigma;i_{\pi(1)},\ldots,i_{\pi(m)})\}=\min_{\sigma \in
\mathcal{C}_m} \{f(X;\sigma;i_1,\ldots,i_m)\}.
\end{equation}

We have to prove that the weight $D(i_1,\ldots,i_m)$ of the smallest
subtree $T'$ of $T$ containing the leaves $i_1,\ldots,i_m$ is equal
to $\frac12 \cdot\min_{\sigma \in \mathcal{C}_m}
\{f(D;\sigma;i_1,\ldots,i_m)\}$. It is enough to prove this for
$i_1=1,\ldots,i_m=m$ (the general case is proved completely
analogously). By equation (\ref{indep. of perm.}), we may also
assume the leaves of $T'$ are well-numbered.

Let $e=(x,y)$ be an edge of $T'$ with $y$ a child of $x$. We claim
that for all $\sigma\in \mathcal{C}_m$, the weight $w(e)$ of $e$
is taken into account in at least two of the $m$ terms of
$$f(D;\sigma;1,\ldots,m)=D(1,\sigma(1))+D(\sigma(1),\sigma^2(1))+\ldots+D(\sigma^{m-1}(1),1)$$
and in exactly two of the summands of
$$f(D;\tau;1,\ldots,m)=D(1,2)+D(2,3)+\ldots+D(m,1),$$ where
$$\tau=\left(\begin{matrix} 1 & 2 & \ldots & m-1& m\\ 2& 3& \ldots&
m& 1\end{matrix}\right)\in \mathcal{C}_m.$$ Using this claim, we immediately see
$$D(i_1,\ldots,i_m)=\frac12 \cdot f(D;\tau;1,\ldots,m)=\frac12 \cdot\min_{\sigma \in
\mathcal{C}_m} \{f(D;\sigma;1,\ldots,m)\}.$$

To finish this theorem, we only need to prove the claim. Consider
the split of $T'$ induced by $e$ and let $T''$ be the component of
the split containing $y$ (hence $T''$ is the maximal subtree of $T'$
containing $y$ but not $x$). Denote the set of leaves of $T''$ by
$L''$. We may assume $1\in L''$ (the case $1\not\in L''$ is
analogous). Note that in this case $L''$ is of the form
$\{1,\ldots,s\}$ for some $s<m$.

The weight of $e$ is taken into account in the term $D(i,j)$ (i.e.
the path between the leaves $i$ and $j$ of $T''$ passes $e$) if and
only if $i\in L''$ and $j\not\in L''$ or vice versa. Thus $w(e)$
is only counted in the two terms $D(s,s+1)$ and $D(m,1)$ of
$f(D;\tau;1,\ldots,m)$.

So it is enough to show that there exists a $t\in\{0\ldots,m-1\}$
such that $\sigma^t(1)\in L''$ and $\sigma^{t+1}(1)\not\in L''$ (the
other case is proved analogously). If we assume this is not the case
(so $\sigma^t(1)\in L''$ implies $\sigma^{t+1}(1)\in L''$), we get
$L''=\{1,\ldots,m\}$, a contradiction.
\end{proof}

\begin{Cor} 
If $D\in \mathcal{G}_{2,n}\subset \mathbb{R}^{n \choose 2}$, we have that $D(i_1,\ldots,i_m)$ is equal to  
$$\left( \bigoplus_{\sigma \in \CM}
\left(D(i_1,i_{\sigma(1)})\otimes D(i_{\sigma(1)},i_{\sigma^2(1)})\otimes \cdots
\otimes D(i_{\sigma^{m-1}(1)}, i_{\sigma^m(1)})\right)^{-1}\right)^{-\frac{1}{2}}.$$
\end{Cor}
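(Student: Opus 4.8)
The plan is to recognize that this corollary is nothing more than Theorem \ref{thm description phi} rewritten in the language of the tropical semiring, so the entire proof consists of unwinding the tropical operations back into their classical counterparts. I would start by recalling the dictionary: tropical multiplication $\otimes$ is ordinary addition, the tropical sum $\oplus$ is the maximum, the tropical inverse $a^{-1}$ is the ordinary negation $-a$ (since $0$ is the unit for $\otimes$), and more generally the tropical power $a^{c}$ equals the ordinary scalar multiple $c\cdot a$. Every symbol in the displayed formula is then translated by this dictionary.

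First I would decode the innermost factor. For a fixed $\sigma\in\CM$, the tropical product
$$D(i_1,i_{\sigma(1)})\otimes D(i_{\sigma(1)},i_{\sigma^2(1)})\otimes \cdots \otimes D(i_{\sigma^{m-1}(1)},i_{\sigma^m(1)})$$
is by definition the ordinary sum of these $m$ distances, which is exactly the quantity $f(D;\sigma;i_1,\ldots,i_m)$ introduced in the proof of Theorem \ref{thm description phi}. Raising it to the tropical power $-1$ negates it, producing $-f(D;\sigma;i_1,\ldots,i_m)$. Taking the outer tropical sum $\bigoplus_{\sigma\in\CM}$ then amounts to taking the maximum, and using that the maximum of a collection of negatives is the negative of the minimum, I get
$$\bigoplus_{\sigma\in\CM}\left(\cdots\right)^{-1}=\max_{\sigma\in\CM}\{-f(D;\sigma;i_1,\ldots,i_m)\}=-\min_{\sigma\in\CM}\{f(D;\sigma;i_1,\ldots,i_m)\}.$$

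Finally I would apply the outermost tropical power $-\tfrac12$, which multiplies by $-\tfrac12$ and thereby converts $-\min$ into $\tfrac12\min$, so that the whole expression collapses to $\tfrac12\min_{\sigma\in\CM}\{f(D;\sigma;i_1,\ldots,i_m)\}$. By Theorem \ref{thm description phi} this is precisely $\phi^{(m)}(D)_{i_1,\ldots,i_m}=D(i_1,\ldots,i_m)$, which is the claim. There is no genuine obstacle here; the only point requiring care is the sign bookkeeping when passing between $\max$ and $\min$ through the tropical inverses, together with the observation that the outer exponent $-\tfrac12$ is exactly what is needed to absorb both that negation and the factor $\tfrac12$ already present in the definition of $\phi^{(m)}$.
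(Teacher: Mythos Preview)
Your proposal is correct and is exactly the intended argument: the paper states this corollary without proof, since it is nothing more than the tropical reformulation of Theorem~\ref{thm description phi}, and your unwinding of $\otimes$, $\oplus$, and tropical powers into ordinary addition, maximum, and scalar multiplication is precisely the translation the reader is meant to perform. The sign bookkeeping you highlight (turning $\max$ of negatives into $-\min$, then cancelling with the outer exponent $-\tfrac12$) is the only thing to check, and you have it right.
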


\begin{Rmk} {\fontshape{n}\selectfont
In each component $D(i_1,\dots,i_m)$, the minimum is attained at
least twice. Indeed, assume the minimum is attained for $\sigma=\tau$. Since
$$f(D;\tau;i_1,\ldots,i_m)=f(D;\tau^{-1};i_1,\ldots,i_m),$$ the
minimum is also attained for $\sigma=\tau^{-1}$. Note that this
could be useful for computations, since it permits us to consider
only $\frac{|{\mathcal{C}}_m|}{2}$ permutations. Furthermore, 
if $\{i_j,i_k\}$ is a cherry of $T'$, the minimum is 
also attained for $\sigma=(j k) \circ \tau \circ (j k)$, whereby $(j k)$ is the 
transposition in $\mathcal{S}_m$ switching $j$ and $k$. 
}\end{Rmk}
 
\begin{Rmk} {\fontshape{n}\selectfont
The map $\phi^{(m)}$ is not injective on the whole domain $\RR^{n
\choose 2}$. For example, consider $D,D' \in \RR^{n \choose
2}$, whereby $D(i,j)=1$ for all $1\leq i<j\leq n$ and $D'$ only differs
from $D$ in the last coordinates, with $D'(n-1,n)=2$. Clearly, one has $D \in
\GT$, $D' \not\in \GT$ and $\phi^{(m)}(D)=\phi^{(m)}(D')$. However, 
Theorem \ref{thm pachter-speyer} implies that the restriction of $\phi^{(m)}$ to $\GT$ is injective if $n\geq 2m-1$.
}\end{Rmk}

\begin{Prop} \label{wm}
$\phi^{(m)}(\GT) \subseteq \TM \cap \phi^{(m)}(\RR^{n\choose 2})$
\end{Prop}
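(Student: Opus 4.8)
The plan is to dispose of the trivial half of the inclusion first and then reduce the remaining half to the four-point condition of Theorem \ref{thm tree metric}. Since $\GT\subseteq\RR^{n\choose 2}$, one has $\phi^{(m)}(\GT)\subseteq\phi^{(m)}(\RR^{n\choose 2})$ for free, so the entire content is the inclusion $\phi^{(m)}(\GT)\subseteq\TM$. Unwinding the definition of $\TM$, I would fix an $(m-2)$-subset $R\subseteq[n]$ together with $i,j,k,l\in[n]\setminus R$ and show that $Y:=\phi^{(m)}(D)$ lies in $\TT(\trop(p_{R,ijkl}))$, i.e. that the maximum of the three numbers $Y_{Rij}+Y_{Rkl}$, $Y_{Rik}+Y_{Rjl}$, $Y_{Ril}+Y_{Rjk}$ is attained at least twice. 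Here $D\in\GT$ is realized by a tree $T$, and by Theorem \ref{thm description phi} the coordinate $Y_S$ is exactly the weight $w(T_S)$ of the minimal subtree $T_S$ of $T$ spanning the leaf set $S$. The case $m=2$ is degenerate ($R=\emptyset$, $\phi^{(2)}=\mathrm{id}$, $\mathcal T_{2,n}=\GT$) and immediate, so I would assume $m\geq 3$, whence $R\neq\emptyset$ and $T_R$ is a nonempty subtree.

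The key step is a weight-decomposition obtained by contracting $T_R$. Let $\hat T$ be the weighted tree obtained from $T$ by contracting the connected subtree $T_R$ to a single vertex $\rho$ (edges inside $T_R$ are deleted, all other edge weights are kept); then $i,j,k,l$ survive as leaves of $\hat T$, and I write $\hat d$ for its tree metric and $g(x):=\hat d(x,\rho)$ for the length of the path attaching $x$ to $T_R$. The point is that the edges of $T_{R\cup\{x,y\}}$ lying outside $T_R$ form, after contraction, precisely the minimal subtree of $\hat T$ spanning $\{x,y,\rho\}$, whose weight is $\tfrac12(\hat d(x,y)+\hat d(x,\rho)+\hat d(y,\rho))$. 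This yields the formula $w(T_{R\cup\{x,y\}})=w(T_R)+\tfrac12\bigl(g(x)+g(y)+\hat d(x,y)\bigr)$ for every pair $x,y\in\{i,j,k,l\}$, which cleanly separates the $R$-dependent contribution from the pairing-dependent one.

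Substituting this formula into the three numbers above, each of them equals $2\,w(T_R)+\tfrac12\bigl(g(i)+g(j)+g(k)+g(l)\bigr)+\tfrac12\cdot(\text{the corresponding }\hat d\text{-pairing sum})$, where the $\hat d$-pairing sums are $\hat d(i,j)+\hat d(k,l)$, $\hat d(i,k)+\hat d(j,l)$ and $\hat d(i,l)+\hat d(j,k)$. The first two summands are the same for all three pairings, so the maximum over the three numbers is attained at exactly the pairings maximizing the $\hat d$-pairing sum. Since $\hat T$ is a tree, $\hat d$ is a tree metric, and the four-point condition of Theorem \ref{thm tree metric} guarantees that this maximum is attained at least twice. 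Hence the maximum of the three original numbers is attained at least twice, i.e. $Y\in\TT(\trop(p_{R,ijkl}))$; as $R,i,j,k,l$ were arbitrary, $Y\in\TM$, which finishes the proof.

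The step I expect to be the main obstacle is making the contraction argument rigorous, i.e. verifying that passing from $T$ to $\hat T$ identifies the ``extra'' edges of $T_{R\cup\{x,y\}}$ with the Steiner tree of $\{x,y,\rho\}$, and thereby establishing the decomposition formula with the correct factor $\tfrac12$; once this bookkeeping is in place the four-point condition finishes the argument at once. An alternative that avoids contraction, at the cost of a case analysis, is to write each of the three numbers as a fixed constant minus the overlaps of the attachment paths of $i,j,k,l$ to $T_R$, and to verify the ``twice-attained'' claim according to how the four attachment points coincide; but the contraction gives a uniform treatment and I would prefer it.
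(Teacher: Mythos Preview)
Your proposal is correct and follows essentially the same route as the paper: contract the minimal subtree $T_R$ to a single point, use the linear $m=3$ (Steiner) formula $w(T_{R\cup\{x,y\}})=w(T_R)+\tfrac12(\hat d(x,y)+\hat d(x,\rho)+\hat d(y,\rho))$ to strip off a common additive constant from the three pairings, and then invoke the four-point condition on the contracted tree. The only cosmetic difference is that the paper phrases the intermediate step as ``$\phi^{(m)}(D)\in\TT(\trop(p_{R,ijkl}))\iff\phi^{(3)}(D')\in\TT(\trop(p_{R',i'j'k'l'}))\iff D'\in\TT(\trop(p_{i'j'k'l'}))$'' rather than writing out the constant explicitly, but the content is identical.
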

\begin{proof}
The inclusion $\phi^{(m)}(\GT) \subset \phi^{(m)}(\RR^{n\choose 2})$ is obvious, 
while $\phi^{(m)}(\GT) \subset \TM $ follows from \cite{PaSp}. For sake of completeness, we include the proof in this paper.

Consider a tree $T$ with leaf set $[n]$ and distance matrix $D$. Let $R$ be an $(m-2)$-subset of $[n]$ and $i,j,k,l\in [n]\setminus R$. We have to prove that $$\phi^{(m)}(D)\in \TT(\trop(p_{R,ijkl})).$$ 

Let $[R]$ be the smallest subtree of $T$ containing the leaves in $R$ and let $T'$ be the tree obtained from $T$ by contracting $[R]$ to a point. Denote by $i',j'$, etc. the images of respectively $i,j$, etc. in $T'$. Note that $R'$ is a leaf of $T'$. We have $$D(R,i,j)=D'(R',i',j')+D(R),$$ hence $\phi^{(m)}(D)\in \TT(\trop(p_{R,ijkl}))$ if and only if $\phi^{(3)}(D')\in \TT(\trop(p_{R',i'j'k'l'}))$, where $D'$ is the distance matrix of $T'$. 

Now Remark \ref{remark phi for m=3} below implies $$D'(R',i',j')=\frac12(D'(i',j')+D'(i',R')+D'(j',R')),$$ so $\phi^{(3)}(D')\in \TT(\trop(p_{R',i'j'k'l'}))$ if and only if $D'\in \TT(\trop(p_{i'j'k'l'}))$. Hence the statement follows from Theorem \ref{thm tree metric}.  
\end{proof}

\section{The $3$-dissimilarity maps of trees} \label{section m=3}

Denote the coordinates of $\RR^{n \choose 2}$ by $X(i,j)$ (here we
index over all integers $i,j$ with $1\leq i<j\leq n$) and
the coordinates of $\RR^{n \choose 3}$ by $X(i,j,k)$ (here we index
over all integers $i,j,k$ with $1\leq i<j<k\leq n$). Recall
that if $D\in \mathcal{G}_{2,n}$ is a tree, $D(i,j)$ is the distance between leaf i and leaf j.

\begin{Rmk} \label{remark phi for m=3}
Since $\mathcal{C}_3=\{\sigma_1,\sigma_2\}$ with $$\sigma_1=\left(\begin{matrix} 1 & 2 & 3\\ 2& 3& 1\end{matrix}\right)\quad \text{and} \quad
\sigma_2=(\sigma_1)^{-1}=\left(\begin{matrix} 1 & 2 & 3\\ 3& 1& 2\end{matrix}\right),$$ the map $\phi^{(3)}$ sends $X=(X(i,j))_{i,j}$ to $(X(i,j,k))_{i,j,k}$ with $$X(i,j,k)=\frac12 \cdot(X(i,j)+X(i,k)+X(j,k)).$$ So if $D\in \mathcal{G}_{2,n}$, the $3$-subtree weights of the tree $D$ are given by $D(i,j,k)=\frac12 \cdot(D(i,j)+D(i,k)+D(j,k))$.
\end{Rmk}

The following results states that for the case $m=3$ the equality holds in Proposition \ref{wm} if $n\geq 5$.

\begin{Prop} \label{Prop equality for m=3} If $n\geq 5$, we have $\phi^{(3)}(\GT)=\mathcal{T}_{3,n}\cap \phi^{(3)}(\RR^{n\choose 2})$
\end{Prop}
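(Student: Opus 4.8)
The plan is to establish the reverse inclusion $\mathcal{T}_{3,n}\cap \phi^{(3)}(\RR^{n\choose 2}) \subseteq \phi^{(3)}(\GT)$, since Proposition \ref{wm} already gives the forward containment. So I start with a point $Y\in \mathcal{T}_{3,n}\cap \phi^{(3)}(\RR^{n\choose 2})$, meaning $Y=\phi^{(3)}(X)$ for some dissimilarity matrix $X\in\RR^{n\choose 2}$ and $Y$ satisfies all the three-term tropical Pl\"ucker relations $\trop(p_{R,ijkl})$ with $|R|=1$. By Remark \ref{remark phi for m=3}, each coordinate is $Y(i,j,k)=\frac12(X(i,j)+X(i,k)+X(j,k))$. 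The goal is to produce a \emph{tree} dissimilarity matrix $X'\in\GT$ with $\phi^{(3)}(X')=Y$; the most natural guess is $X'=X$ itself, so the real content is to show that the three-term conditions on $Y$ force $X$ (or a suitable modification of it) to satisfy the four-point condition, hence to lie in $\GT$ by Theorem \ref{thm tree metric}.

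The key computation I would carry out first is to translate a single relation $Y\in\TT(\trop(p_{R,ijkl}))$, with $R=\{r\}$, back into a statement about $X$. Writing out $Y(r,i,j)=\frac12(X(r,i)+X(r,j)+X(i,j))$ and the analogous expressions, the three tropical monomials $Y(r,i,k)+Y(r,j,l)$, $Y(r,i,j)+Y(r,k,l)$, $Y(r,i,l)+Y(r,j,k)$ all share the common summand $\frac12(X(r,i)+X(r,j)+X(r,k)+X(r,l))$ coming from the $r$-terms, so the ``$r$'' contributions cancel out of the comparison. What remains is exactly the comparison of $X(i,k)+X(j,l)$, $X(i,j)+X(k,l)$ and $X(i,l)+X(j,k)$: the condition that the maximum of the three $Y$-sums is attained twice becomes precisely the condition that the maximum of these three $X$-sums is attained twice, i.e.\ $X\in\TT(\trop(p_{ijkl}))$. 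This is the heart of the argument, and it should be a clean algebraic identity.

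From there the plan is to range over the choice of $R=\{r\}$. For a fixed quadruple $\{i,j,k,l\}\subset[n]$, the four-point condition for $X$ on that quadruple is recovered from the single relation $p_{r,ijkl}$ for \emph{any} admissible $r$, and since $n\geq 5$ such an $r\in[n]\setminus\{i,j,k,l\}$ always exists (this is exactly where the hypothesis $n\geq 5$ enters, guaranteeing a fifth leaf to play the role of $R$). Thus every four-point condition on $X$ is certified, so $X$ satisfies the four-point condition of Theorem \ref{thm tree metric}. One must also check the non-negativity/metric hypotheses needed to invoke that theorem; if $X$ as given is not a priori a metric, I would argue that the tropical conditions together with $X$ lying in the image constrain it enough, or replace $X$ by the unique tree metric sharing its relevant coordinates. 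Concluding, $X\in\GT=\TN$, and since $\phi^{(3)}(X)=Y$ we get $Y\in\phi^{(3)}(\GT)$, finishing the inclusion.

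The main obstacle I anticipate is the subtlety hidden in the phrase $\phi^{(3)}(\RR^{n\choose 2})$: a given $Y$ in the image may be realized by \emph{many} preimages $X$, not all of which are tree metrics, so I cannot simply assert ``pick any $X$ with $\phi^{(3)}(X)=Y$ and it lies in $\GT$.'' The cancellation computation shows that \emph{every} preimage $X$ satisfies the four-point condition, which resolves this, but I need to be careful that the four-point condition plus membership in the image genuinely forces $X\in\GT$ rather than merely into the larger cone of metrics satisfying the combinatorial condition without non-negativity. Verifying that the non-negativity constraints are automatically inherited, and that no degenerate preimage escapes the argument, is the delicate point; the algebra of the three-term relations is otherwise straightforward.
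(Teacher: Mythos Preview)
Your proposal is correct and follows essentially the same route as the paper: pick any preimage $X$ of $Y$, fix a fifth index $r\in[n]\setminus\{i,j,k,l\}$, expand the three sums $Y(r,i,j)+Y(r,k,l)$ etc.\ via Remark~\ref{remark phi for m=3}, cancel the common term $\tfrac12\bigl(X(r,i)+X(r,j)+X(r,k)+X(r,l)\bigr)$, and read off the four-point condition for $X$. Your worry about non-negativity is unnecessary and the paper does not address it either: here $\GT$ is the tropical Grassmannian, i.e.\ the common tropical zero locus of the $p_{ijkl}$, so membership in $\GT$ is \emph{equivalent} to the four-point condition holding for every quadruple, with no metric or sign constraint required; you do not need to invoke Theorem~\ref{thm tree metric} at all.
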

\begin{proof}
By Proposition \ref{wm}, it is enough to show
that for a general point $P\in \phi^{(3)}(\RR^{n \choose 2})\cap \TTT$,
there exists a point $D\in \GT$ such that $\phi^{(3)}(D)=P$.
Since $P\in \phi^{(3)}(\RR^{n \choose 2})$, there exists a point
$D \in \RR^{n \choose 2}$ such that $\phi^{(3)}(D)=P$. It suffices to
prove that $D\in \GT$. In order to do this, we show that in each triplet
$$\{D(i,j)+D(k,l),D(i,k)+D(j,l),D(i,k)+D(j,k)\},$$ the maximum is attained at least
twice. Fix $S\in [n]\setminus \{i,j,k,l\}$ ($n\geq 5$). Since $P\in \TTT$, in the triplet 
$$\{P(S,i,j)+P(S,k,l),P(S,i,k)+P(S,j,l),P(S,i,l)+P(S,j,k)\},$$ 
the maximum is attained at least twice. Note that
\begin{equation*}
\begin{split}
P(S,i,j)+P(S,k,l) &= \frac12(C+D(i,j)+D(k,l)),\\
P(S,i,k)+P(S,j,l) &= \frac12(C+D(i,k)+D(j,l)),\\
P(S,i,l)+P(S,j,k) &= \frac12(C+D(i,k)+D(j,k)),
\end{split}
\end{equation*}
where $C=D(S,i)+D(S,j)+D(S,k)+D(S,l)$. Hence the maximum in
$\{D(i,j)+D(k,l),D(i,k)+D(j,l),D(i,k)+D(j,k)\}$ is also attained at least twice, thus $D\in \GT$
and $P \in \phi^{(3)}(\GT)$.
\end{proof}

For the proof of the proposition below, we need an extra definition. 

\begin{Def}
An ultrametric $D$ on $[n]$ is a metric  which satisfies the following strengthened version of the triangle inequality:
$$\forall i,j,k\in [n]\,:\,D(i,j)\leq \max\{D(i,k),D(j,k)\}.$$ Equivalently, at least two of the three terms $D(i,j), D(i,k), D(j,k)$ are the same. 
\end{Def}

\begin{Rmk} {\fontshape{n}\selectfont
In general, the dissimilarity matrix $D$ of a tree $T$ is not an ultrametric. In case $D\in\GT$ is an ultrametric, we can realize $D$ by an {\it equidistant} tree, i.e. a rooted tree such that the distance $F$ between the root and each leaf is equal. In particular, $2F=\max\{D(i,j)\,|\,i,j\in X \text{ and }i\neq j\}$.  
} \end{Rmk}

\begin{Prop} \label{Thm inclusion} $\phi^{(3)}(\GT)\subset \mathcal{G}_{3,n}$
\end{Prop}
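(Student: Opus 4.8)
The plan is to realise each point of $\phi^{(3)}(\GT)$ as the negative valuation of the Pl\"ucker vector of an honest point of $G(3,n)$ over the Puiseux field $K$; membership in $\GTT=\TT(I_{3,n})$ is then immediate from Theorem \ref{Thm def tropical variety}.

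First I would reduce to the ultrametric case. Every Pl\"ucker relation is homogeneous for the $\ZZ^n$-grading $\deg(x_{abc})=e_a+e_b+e_c$, so $I_{3,n}$ is stable under the torus action $x_{abc}\mapsto\lambda_a\lambda_b\lambda_c\,x_{abc}$ and $\GTT$ is invariant under translation by the lineality space $L=\{(v_a+v_b+v_c)_{abc}\mid v\in\RR^n\}$. Given $D\in\GT$ realised by a tree $T$, I root $T$ at an interior node $r$, let $d_i$ be the distance from $r$ to leaf $i$, and lengthen the pendant edge at each leaf $i$ by $v_i=F-d_i$ with $F\ge\max_i d_i$. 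The resulting tree is equidistant, so its metric $D'$, with $D'(i,j)=D(i,j)+v_i+v_j$, is an ultrametric; by Remark \ref{remark phi for m=3} one has $\phi^{(3)}(D')=\phi^{(3)}(D)+(v_a+v_b+v_c)_{abc}\in\phi^{(3)}(D)+L$. Hence $\phi^{(3)}(D)\in\GTT$ if and only if $\phi^{(3)}(D')\in\GTT$, and I may assume $D$ is an ultrametric.

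For an ultrametric $D$ the heart of the argument is to embed the scaled space $([n],\tfrac12 D)$ isometrically into $(K,-\val)$, i.e.\ to find distinct $y_1,\dots,y_n\in K$ with $\val(y_i-y_j)=-\tfrac12 D(i,j)$ for all $i\neq j$. Such an embedding exists exactly because $D$ is an ultrametric: the non-archimedean inequality $\val(y_i-y_k)\ge\min\{\val(y_i-y_j),\val(y_j-y_k)\}$ translates into $D(i,k)\le\max\{D(i,j),D(j,k)\}$. Concretely I would build the $y_i$ from the cluster hierarchy of the equidistant tree, attaching a generic coefficient $\gamma_e\in\CC$ to each edge $e$ and setting $y_i=\sum_e\gamma_e\,t^{\delta(e)}$ over the edges on the root-to-$i$ path, where $\delta(e)$ is the depth of the upper endpoint of $e$ minus $F$; the generic choice forces the leading term of $y_i-y_j$ to occur at the level where $i$ and $j$ separate, giving the prescribed valuation. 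Forming the $3\times n$ matrix with $i$-th column $(1,y_i,y_i^2)$, its $\{i,j,k\}$-minor equals $(y_j-y_i)(y_k-y_i)(y_k-y_j)$, so $-\val$ of this minor is $\tfrac12(D(i,j)+D(i,k)+D(j,k))=\phi^{(3)}(D)_{ijk}$ by Remark \ref{remark phi for m=3}. Since the $y_i$ are distinct the matrix has rank $3$ and its Pl\"ucker vector lies in $V(I_{3,n})$, whence $\phi^{(3)}(D)=(-\val(p_{ijk}))_{ijk}\in\TT(I_{3,n})=\GTT$ by Theorem \ref{Thm def tropical variety}.

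Two loose ends remain. Since the value group of $K$ is $\QQ$, this construction reproduces $\phi^{(3)}(D)$ exactly only for rational $D$; for arbitrary real tree metrics I would conclude by approximating $D$ by rational tree metrics, using the continuity of $\phi^{(3)}$ together with the fact that $\GTT$ is closed. The main obstacle is the embedding step: one must verify that the generic coefficients can be chosen so that no cancellation ever pushes $\val(y_i-y_j)$ below $-\tfrac12 D(i,j)$, and it is precisely here that the ultrametric hypothesis---secured by the passage to the equidistant tree---is indispensable.
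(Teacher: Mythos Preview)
Your argument is correct and follows essentially the same route as the paper: reduce to an ultrametric, embed the leaves into the Puiseux field via the equidistant tree, form the $3\times n$ Vandermonde matrix so that the minors realise the $3$-weights as negative valuations, and pass to the real case by rational approximation and closedness of $\GTT$. The only cosmetic difference is in the reduction step: the paper fixes the leaf $n$ as a reference and uses the substitution $D'(i,j)=2E+D(i,j)-D(i,n)-D(j,n)$, then undoes it at the end by explicit column scaling of the matrix, whereas you phrase the same move as invariance of $\GTT$ under its lineality space after rooting at an interior vertex---these are two descriptions of the same torus action.
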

\begin{proof}
Let $T$ be a tree with $3$-dissimilarity map $$P=(D(i,j,k))_{i,j,k}=\phi^{(3)}((D(i,j)_{i,j})\in \phi^{(3)}(\GT)\subset \RR^{n\choose 3}.$$ 
If $M\in K^{3\times n}$, we denote the $(3\times 3)$-minor with columns $i,j,k$ by $M(i,j,k)$.  
By Theorem \ref{Thm def tropical variety}, $\mathcal{G}_{3,n}$ is the closure in $\RR^{n\choose 3}$ of the set $$S:=\{(-\val(\det(M(i,j,k))))_{i,j,k}\,|\,M\in K^{3\times n}\}\subset \QQ^{n\choose 3}.$$ Assume first that all the edges of $T$ have rational weights, a fortiori $P\in \QQ^{n\choose 3}$. We are going to show there exists a matrix $M\in K^{3\times n}$ such that $$D(i,j,k)=-\val(\det(M(i,j,k))).$$

Fix a rational number $E$ with $E\geq D(i,n)$ for all $i\in\{1,\ldots,n-1\}$ and define a new metric $D'$ by $$D'(i,j)=2E+D(i,j)-D(i,n)-D(j,n)$$ for all different $i,j\in[n]$ (in particular, $D'(i,n)=2E$ for $i\neq n$). Note that $D'\in \GT$ and that $D'$ an ultrametric on $\{1,\ldots,n-1\}$, so it can be realized by an equidistant $(n-1)$-tree $T''$ with root $r$. Each edge $e$ of $T''$ has a well-defined height $h(e)$, which is the distance from the top node of $e$ to each leaf below $e$. Pick a random rational number $a(e)$ and associate the label $a(e)t^{2h(e)}$ to $e$. If $i\in\{1,\ldots,n-1\}$ is a leaf of $T''$, define the polynomial $x_i(t)$ by adding the labels of all edges between $r$ and $i$. It is easy to see that $D'(i,j)=\deg(x_j(t)-x_i(t))$ for all $i,j\in\{1,\ldots,n-1\}$.    

Denote the distance from $r$ to each edge by $F$. Since $$2F=\max\{D'(i,j)\,|\,1\leq i< j\leq n-1\}<2E,$$ we have $F<E$. The metric $D'$ on $[n]$ can be realized by a tree $T'$, where $T'$ is the tree obtained from $T''$ by adding the leave $n$ together with an edge $(r,n)$ of length $2E-F$.     
If we define $x_n(t)=t^{2E}$, we get that $D'(i,j)=\deg(x_j(t)-x_i(t))$ for all $i,j\in[n]$. 

Now consider the matrix 
$$M'=\begin{bmatrix} 1&1&1&\ldots&1 \\ x_1(t)&x_2(t)&x_3(t)& \ldots &x_n(t)\\ x_1(t)^2&x_2(t)^2&x_3(t)^2& \ldots &x_n(t)^2\end{bmatrix}.$$ We have $\det(M'(i,j,k))=(x_j(t)-x_i(t))(x_k(t)-x_i(t))(x_k(t)-x_j(t))$, hence $$D'(i,j)+D'(i,k)+D'(j,k)=\deg(\det(M'(i,j,k))).$$ 

Let $M$ be the matrix obtained from $M'$ by multiplying, for each $i$, the $i$-th column of $M'$ by $(t^{D(i,n)-E})^2$. Since 
\begin{eqnarray*} D(i,j) &=&D'(i,j)+(D(i,n)-E)+(D(j,n)-E)\\ &=&\deg\left(t^{D(i,n)-E} \cdot t^{D(j,n)-E} \cdot (x_i(t)-x_j(t))\right),\end{eqnarray*} 
we get that $D(i,j)+D(i,k)+D(j,k)=\deg(\det(M(i,j,k)))$. If we replace each $t$ in $M$ by $t^{-1/2}$, we get $$D(i,j,k)=-\val(\det(M(i,j,k))).$$

Now assume $T$ has irrational edge weights. We can approximate $T$ arbitrarily close by a tree $\widetilde{T}$ with rational edge weights. From the arguments above, it follows that the $3$-dissimilarity map $\widetilde{D}$ of $\widetilde{T}$ belongs to $S$, hence $D\in \mathcal{G}_{3,n}$.   
\end{proof}

\begin{Thm} \label{thm m=3} If $n\geq 5$, we have $\phi^{(3)}(\GT)=\phi^{(3)}(\RR^{n\choose 3})\cap \mathcal{G}_{3,n}$.
\end{Thm}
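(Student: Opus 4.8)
The plan is to deduce the equality from a short sandwich of inclusions assembled from three facts already at our disposal: Proposition \ref{Thm inclusion}, which gives $\phi^{(3)}(\GT)\subseteq\GTT$; the general containment $\GM\subseteq\TM$ valid for all $m\geq 3$ and recorded at the end of Section \ref{section tropical}, which specializes to $\GTT\subseteq\TTT$; and Proposition \ref{Prop equality for m=3}, which states that $\phi^{(3)}(\GT)=\TTT\cap\phi^{(3)}(\RR^{n\choose 2})$ whenever $n\geq 5$. No new geometry is required here; the substance has been pushed entirely into these preliminary results. (Note that the intended domain throughout is $\RR^{n\choose 2}$, matching Propositions \ref{wm} and \ref{Prop equality for m=3}.)

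First I would dispatch the inclusion $\phi^{(3)}(\GT)\subseteq\phi^{(3)}(\RR^{n\choose 2})\cap\GTT$. The containment in $\phi^{(3)}(\RR^{n\choose 2})$ is tautological since $\GT\subseteq\RR^{n\choose 2}$, and the containment in $\GTT$ is precisely Proposition \ref{Thm inclusion}; intersecting the two targets yields the claim.

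For the reverse inclusion I would take an arbitrary $P\in\phi^{(3)}(\RR^{n\choose 2})\cap\GTT$. Invoking $\GTT\subseteq\TTT$, the point $P$ also lies in $\phi^{(3)}(\RR^{n\choose 2})\cap\TTT$, and Proposition \ref{Prop equality for m=3} (this is where the hypothesis $n\geq 5$ enters) identifies this latter set with $\phi^{(3)}(\GT)$, so $P\in\phi^{(3)}(\GT)$. Combined with the first step, this gives the desired equality. Schematically, the whole argument is the chain $\phi^{(3)}(\GT)\subseteq\phi^{(3)}(\RR^{n\choose 2})\cap\GTT\subseteq\phi^{(3)}(\RR^{n\choose 2})\cap\TTT=\phi^{(3)}(\GT)$, which forces every inclusion in it to be an equality.

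Because each step is an immediate application of a result proved earlier, I do not anticipate any genuine obstacle at this final stage. The real work has already been done: the delicate direction is Proposition \ref{Thm inclusion}, whose proof constructs, from a tree with rational edge lengths, an explicit Vandermonde-type $(3\times n)$ matrix over the Puiseux series whose $3\times 3$ minors carry the prescribed valuations, together with an approximation argument to handle irrational weights. If anything needs double-checking when assembling the present theorem, it is merely that the hypothesis $n\geq 5$ is used solely through Proposition \ref{Prop equality for m=3}, and that the containment $\GTT\subseteq\TTT$ is the correct specialization of the general statement $\GM\subseteq\TM$; both are routine to confirm.
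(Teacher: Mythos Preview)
Your proposal is correct and follows exactly the paper's own argument: the paper's proof is the one-line remark that the theorem follows from Proposition~\ref{Prop equality for m=3}, Proposition~\ref{Thm inclusion}, and the containment $\mathcal{G}_{3,n}\subset\mathcal{T}_{3,n}$, which is precisely the sandwich of inclusions you wrote out. Your observation that the domain should read $\RR^{n\choose 2}$ rather than $\RR^{n\choose 3}$ is also correct.
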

\begin{proof}
The statement follows from Proposition \ref{Prop equality for m=3}, Proposition \ref{Thm inclusion} and the fact that $\mathcal{G}_{3,n}\subset \mathcal{T}_{3,n}$.
\end{proof}

\section{The $4$-dissimilarity maps of trees} \label{section m=4}

In this section, we give a geometric description of $\phi^{(4)}(\GT)$.

\begin{Rmk} {\fontshape{n}\selectfont
The set $\mathcal{C}_4=\{\sigma_1,\sigma_1^{-1},\sigma_2,\sigma_2^{-1},\sigma_3,\sigma_3^{-1}\}$ with $$\sigma_1=\begin{pmatrix} 1 & 2 & 3 & 4\\ 2& 3& 4 &1\end{pmatrix}, \sigma_2=\begin{pmatrix} 1 & 2 & 3 & 4\\ 2& 4& 1 &3\end{pmatrix}, \sigma_3=\begin{pmatrix} 1 & 2 & 3 & 4\\ 3& 4& 2 &1\end{pmatrix}.$$
Hence the map $\phi^{(4)}$ sends $(X(i,j))_{i,j}$ to $(X(i,j,k,l))_{i,j,k,l}$ where $X(i,j,k,l)$ is equal to the minimum of the three terms
\begin{align*} X(1,2)+X(2,3)+X(3,4)+X(4,1), \\X(1,2)+X(2,4)+X(4,3)+X(3,1),\\ X(1,3)+X(3,2)+X(2,4)+X(4,1),\end{align*} divided by two.
}\end{Rmk}

Consider $M=\RR^{({n \choose 2}\cdot {n-2\choose 2})}$ and take $X(i,j;k,l)$, with $\{i,j,k,l\}\subset[n]$ a quadruple, as coordinates on $M$. For example, $X(j,i;l,k) = X(i,j;k,l)$, but $X(i,k;j,l)\neq X(i,j;k,l)$ and $X(k,l;i,j)\neq X(i,j;k,l)$. 

Let $\pi:\RR^{n \choose 2}\to M: (X(i,j))_{i,j} \mapsto (X(i,j;k,l))_{i,j,k,l}$ with
$$X(i,j;k,l)= \frac12 \cdot(X(i,j)+X(k,l)+\min\{X(i,k)+X(j,l),X(i,l)+X(j,k)\}).$$ 

Let $L$ be the linear subspace of $M$ consisting of points $X(i,j;k,l)$ with
$$X(i,j;k,l)=X(i,k;j,l)=X(i,l;j,k)=X(j,l;i,k)=X(j,k;i,l)=X(k,l;i,j)$$ for all
different $i,j,k,l\in[n]$. Points in $L$ can be projected naturally to $\RR^{n \choose 4}$ by
sending $X(i,j;k,l)$ to $X(i,j,k,l)$. Denote this projection by $p$.

\begin{Prop} $\phi^{(4)}(\GT) = p(\pi(\RR^{n \choose 2}) \cap L )$.
\end{Prop}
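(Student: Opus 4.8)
The plan is to reduce everything to a single quadruple $\{i,j,k,l\}$ and to compare the three \emph{pairing sums}. Fix such a quadruple and abbreviate
\[
A = X(i,j)+X(k,l),\qquad B = X(i,k)+X(j,l),\qquad C = X(i,l)+X(j,k).
\]
First I would rewrite the three cyclic sums in the definition of $\phi^{(4)}$ as $A+C$, $A+B$ and $B+C$, respectively; the defining minimum is then the sum of the two smallest of $A,B,C$, so that
\[
\phi^{(4)}(X)(i,j,k,l)=\tfrac12\bigl(\text{sum of the two smallest among }A,B,C\bigr).
\]

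Next I would compute the six coordinates of $\pi(X)$ attached to this quadruple. Using $X(p,q)=X(q,p)$ one checks they take only three distinct values, each occurring twice, namely
\[
\alpha=\tfrac12\bigl(A+\min\{B,C\}\bigr),\quad \beta=\tfrac12\bigl(B+\min\{A,C\}\bigr),\quad \gamma=\tfrac12\bigl(C+\min\{A,B\}\bigr),
\]
with $X(i,j;k,l)=X(k,l;i,j)=\alpha$, $X(i,k;j,l)=X(j,l;i,k)=\beta$ and $X(i,l;j,k)=X(j,k;i,l)=\gamma$. By definition, $\pi(X)$ lies in $L$ at this quadruple if and only if $\alpha=\beta=\gamma$. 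A short case analysis on the ordering of $A,B,C$ then yields the crucial equivalence: $\alpha=\beta=\gamma$ holds if and only if the maximum of $A,B,C$ is attained at least twice, and in that case the common value equals $\tfrac12(\text{sum of the two smallest})=\phi^{(4)}(X)(i,j,k,l)$. This per-quadruple lemma is the entire content of the proposition, and the place where the four-point condition secretly enters.

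Then I would globalize. Since $\GT=\TT(I_{2,n})$ is cut out by the tropical Pl\"ucker relations $\trop(p_{ijkl})$, a point $X\in\RR^{n\choose 2}$ lies in $\GT$ precisely when, for every quadruple, the maximum of the three numbers $A,B,C$ is attained at least twice. Combining this with the lemma gives $\pi(X)\in L \iff X\in\GT$, whence $\pi(\RR^{n\choose 2})\cap L=\pi(\GT)$, and moreover $p(\pi(X))=\phi^{(4)}(X)$ for every $X\in\GT$ because $p$ reads off the common value $\alpha=\beta=\gamma$. Applying $p$ gives $p(\pi(\RR^{n\choose 2})\cap L)=p(\pi(\GT))=\phi^{(4)}(\GT)$. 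I would finish by writing out both inclusions: given $y\in\pi(\RR^{n\choose 2})\cap L$, any preimage $X$ with $\pi(X)=y$ satisfies $X\in\GT$ (by the equivalence) and $p(y)=\phi^{(4)}(X)\in\phi^{(4)}(\GT)$; conversely $\phi^{(4)}(X)=p(\pi(X))$ with $\pi(X)\in\pi(\RR^{n\choose 2})\cap L$ for every $X\in\GT$.

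I expect the main obstacle to be purely organizational rather than conceptual: keeping the six \emph{ordered} coordinates $X(\cdot\,;\cdot)$ correctly matched to the three values $\alpha,\beta,\gamma$, and stating the equivalence ``the six coordinates coincide $\Leftrightarrow$ the maximum of $A,B,C$ is attained at least twice'' cleanly, since everything else is a formal consequence of the tropical description of $\GT$.
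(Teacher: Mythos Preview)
Your proposal is correct and follows essentially the same route as the paper: the paper's proof consists precisely of the observation that for real numbers $a,b,c$ one has $a+\min\{b,c\}=b+\min\{a,c\}=c+\min\{a,b\}$ if and only if $\max\{a,b,c\}$ is attained at least twice (with common value $\min\{a+b,a+c,b+c\}$), applied to $a=A$, $b=B$, $c=C$ and combined with the Tree Metric Theorem. Your write-up simply unpacks this more explicitly, identifying the six coordinates with $\alpha,\beta,\gamma$ and spelling out the globalization step.
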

\begin{proof}
Note that for any real numbers $a,b,c$, we have
\begin{equation}\label{eqn min} a+\min\{b,c\}=b+\min\{a,c\}=c+\min\{a,b\} \end{equation} if and only if
$\max\{a,b,c\}$ is attained at least twice. If the latter holds, the terms in \eqref{eqn min} are equal to $\min\{a+b,a+c,b+c\}$. 

If we take $a=X(i,j)+X(k,l)$, $b=X(i,k)+X(j,l)$ and $c=X(i,l)+X(j,k)$, the statement follows from the Tree Metric Theorem.
\end{proof}

\section*{Aknowledgments}
We thank Ruriko Yoshida, Anders Jensen and expecially Bernd Sturmfels for their many comments and suggestions which improved this manuscript. The second author is a postdoctoral fellow of the Research Foundation - Flanders (FWO).

\end{document}